\def\expandafter\UrlBreaks\expandafter{\UrlBreaks% save the current one
 \do\a\do\b\do\c\do\d\do\e\do\f\do\g\do\h\do\i\do\j%
 \do\k\do\l\do\m\do\n\do\o\do\p\do\q\do\r\do\s\do\t%
  \do\u\do\v\do\w\do\x\do\y\do\z\do\A\do\B\do\C\do\D%
  \do\E\do\F\do\G\do\H\do\I\do\J\do\K\do\L\do\M\do\N%
  \do\O\do\P\do\Q\do\R\do\S\do\T\do\U\do\V\do\W\do\X%
  \do\Y\do\Z\do\*\do\-\do\~\do\'\do\"\do\-}%
\newtheorem{theorem}{Theorem}
\newtheorem{proposition}[theorem]{Proposition}
\newtheorem{cor}[theorem]{Corollary}
\theoremstyle{definition}
\newtheorem{definition}[theorem]{Definition}
\newtheorem{example}[theorem]{Example}
\newtheorem{remark}[theorem]{Remark}
\newtheorem{assumption}[theorem]{Assumption}
\newcolumntype{M}[1]{>{\centering\arraybackslash}m{#1}}
\newcolumntype{P}[1]{>{\centering\arraybackslash}p{#1}}
\renewcommand{\AA}{\mathcal{A}}
\renewcommand{\SS}{\mathcal{S}}
\newcommand{\OO}{\mathcal{O}}
\definecolor{NiceBlue}{rgb}{0.2,0.2,0.75}
\newcommand{\struc}[1]{{{\color{NiceBlue} #1}}}
\newcommand{\struct}[1]{{\emph{\color{NiceBlue} #1}}}
\tikzset{
  saveuse path/.code 2 args={
    \pgfkeysalso{#1/.style={insert path={#2}}}%
    \global\expandafter\let\csname pgfk@\pgfkeyscurrentpath/.@cmd\expandafter\endcsname
      % not optimal as it is now global through out the document
                           \csname pgfk@\pgfkeyscurrentpath/.@cmd\endcsname
    \pgfkeysalso{#1}},
  /pgf/math set seed/.code=\pgfmathsetseed{#1}}
\definecolor{MPIturquoise}{RGB}{66, 184, 178} %{50, 132, 191}
\def\endexa{\hfill$\hexagon$}
\title{Algebraic Optimization of Sequential Decision Problems}
\author[M. Dressler]{Mareike Dressler}
\address{Mareike Dressler, School of Mathematics and Statistics, University of New South Wales, Sydney, NSW 2052, Australia.}
\author[M. Garrote-L\'{o}pez]{Marina Garrote-L\'{o}pez}
\address{Marina Garrote-L\'{o}pez, Department of Mathematics, University of British Columbia, Vancouver, V6T 1Z2 BC, Canada.}
\author[G. Mont\'{u}far]{Guido Mont\'{u}far}
\address{Guido Mont\'{u}far, Departments of Mathematics and Statistics, University of California, Los Angeles, 90095 CA, USA.}
\author[J. M\"{u}ller]{Johannes M\"{u}ller}
\author[K. Rose]{Kemal Rose}
\address{Guido Mont\'{u}far, Johannes M\"{u}ller, Kemal Rose, Max Planck Institute for Mathematics in the Sciences, 04103 Leipzig, Germany}
\newcommand{\sep}{, }
\subjclass[2020]{62R01\sep % algebraic statistics 
90C23\sep % polynomial optimization 
90C40% Markov and semi-Markov decision processes
} 
\keywords{Partially observable Markov decision process, algebraic degree, polynomial optimization, state aggregation, state-action frequencies}
\begin{document}

\begin{abstract}
We study the optimization of the expected long-term reward in finite partially observable Markov decision processes over the set of stationary stochastic policies. In the case of deterministic observations, also known as state aggregation, the problem is equivalent to optimizing a linear objective subject to quadratic constraints. We characterize the feasible set of this problem as the intersection of a product of affine varieties of rank one matrices and a polytope. Based on this description, we obtain bounds on the number of critical points of the optimization problem. Finally, we conduct experiments in which we solve the KKT equations or the Lagrange equations over different boundary components of the feasible set, and compare the result to the theoretical bounds and to other constrained optimization methods. 
\end{abstract}

\maketitle

\section{Introduction}

Solving sequential decision problems has a long-standing history in
computer science, economics, mathematics, and statistics \cite{bellman1957markovian,howard1960dynamic,chernoff1968optimal}. 
Such problems include the optimal control of robots, machine maintenance, search problems, and inventory problems, which can be formulated in continuous or discrete time, space, and control variables~\cite{white1988further, bellman1966dynamic}. 
A sequential decision problem is particularly challenging if only partial information about the true state of the system is available to the acting agent. 

Partially observable Markov decision processes (POMDPs) offer a model for sequential decision-making under state uncertainty. 
Here, at every time step the agent selects an action and receives an instantaneous reward depending on the selected action and the current state, which in turn influence the state at the next time step. 
However, the agent selects its actions based on observations that might not fully reveal the underlying state. 
We study stochastic action selection mechanisms that do not depend on the prior history of observations but only on the current observation, which are known as memoryless, stationary, or reactive policies. 

A common measure for the performance of a policy is the expectation of the instantaneous rewards accumulated over time and discounted into the future. We will refer to this measure simply as the reward function. 
Identifying a policy that maximizes the reward is a challenging task since it is a nonconcave function that can exhibit non global strict local optima \cite{bhandari2019global}. 
Indeed it has been shown that this optimization problem is NP-hard in general~\cite{vlassis2012computational}. 
A common approach are local optimization procedures, such as policy gradient methods~\cite{sutton1999policy,azizzadenesheli2018policy}. Whereas global optimality guarantees for gradient methods in fully observable systems have been given in~\cite{bhandari2019global}, for general POMDPs we do not have such guarantees. 

Various approaches have been suggested to study the geometry underlying the optimization problem. A classic line of works has established that in the fully observable case (where the observation fully identifies the underlying state), the optimization problem is equivalent to a linear program over a polytope of feasible state-action frequencies~\cite{derman1970finite, kallenberg1994survey}. 
These studies have been complemented by the characterization of the set of feasible value functions of a Markov decision process as a finite union of polytopes~\cite{dadashi2019value, wang2022geometry, wu2022geometric}. 
However, for partially observable systems the geometry of the reward optimization problem is more complex. 
The problem can be formulated as a quadratically constrained linear program with the policy and the value function as search variables~\cite{amato2006solving}. 
More recently, the set of feasible state-action frequencies was described as a union of convex sets in~\cite{montufar2015geometry} and as a semialgebraic set in~\cite{mueller2021geometry}, who also provided a method for computing the polynomial constraints. 
This yields a polynomially constrained linear program with the state-action frequencies as search variables. 
The possible advantages of taking this constrained optimization perspective in state-action space were recently studied in~\cite{mueller2022rosa} using interior point methods. 

Related approaches have been proposed in other settings as well. In continuous time and space, a convex relaxation of linear quadratic control problems based on state-action frequencies has been proposed and studied in~\cite{lasserre2008nonlinear}. 
In~\cite{neyman2003real} the graphs of different stochastic games are described as semialgebraic sets, where (generalized) Nash equilibria including a convex relaxation for their computation have been studied with algebraic tools in~\cite{nie2021convex, portakal2022geometry}. 

\smallskip
In this paper we study finite POMDPs and build upon the recent work~\cite{mueller2021geometry}, which expresses reward optimization in POMDPs with memoryless stochastic policies as a linear program with polynomial constraints; that is, we are concerned with the optimization of a linear function over a nonconvex semialgebraic set. 
We focus on the case of deterministic observations, where, as we will see, the polynomial constraints are quadratic and can be written as a sum of certain $2\times2$ minors (Theorem~\ref{prop: feasible frequencies}). 
By investigating the geometry of the semialgebraic set, we determine upper bounds on the number of (complex) critical points of the reward optimization problem, i.e., its algebraic degree (Theorem~\ref{theo: algebraic_degree_of_subproblem}). 
We then provide a computational method that solves the optimization problem by computing the critical points via the Karush-Kuhn-Tucker conditions, whereby we identify ways to reduce the combinatorial complexity of the problem by focusing on relevant boundary components (Theorem~\ref{theo: location_of_maximizers}, \cite{montufar2017geometry}). 
We implement this approach using numerical algebra methods~\cite{HomotopyContinuation} that automatically certify the correctness of the results \cite{breiding2021certifying}. 
We use a convex relaxation of the polynomial problem to certify the global optimality of the results. 
Moreover, we observe that in specific instances this numerical algebraic approach leads to superior results than two commonly used optimization methods. Finally, we compare the number of critical points obtained in numerical experiments with our theoretical bounds. 

\smallskip

The paper is organized as follows. 
In Section~\ref{sec: POMDP}, we introduce partially observable Markov decision processes and related notation. 
In Section~\ref{sec: geometry of reward optimization}, we describe 
the geometry of the feasible set and its defining (in)equalities for the reward optimization problem in POMDPs with deterministic observations. 
In Section~\ref{sec: complexity of the problem}, we provide an upper bound on the number of critical points for the problem we are considering. 
Finally, in Section~\ref{sec: optimizing decision rules}, we use the description of reward maximization as a constrained polynomial optimization problem to numerically solve the critical equations. 

\medskip
\textbf{Notation:}
For a finite set $\mathcal X$ we denote the free linear space over $\mathcal X$ by $\struc{\mathbb R^\mathcal X} = \{f\colon\mathcal X\to\mathbb R\}$ and the simplex of probability distributions over \(\mathcal X\) as $\struc{\Delta_{\mathcal X}} = \left\{\mu\in \mathbb R^{\mathcal X} : \sum_x \mu_x = 1 \text{ and } \mu \geq 0 \right\}$.
The \struct{conditional probability polytope} consisting of all column-stochastic matrices\footnote{We choose to work with column-stochastic rather than row-stochastic matrices to have $Q_{yx}=Q(y|x)$, which makes composition of two Markov kernels $Q_1\circ Q_2$ equivalent to matrix multiplication $Q_1Q_2$.} 
in $\mathbb{R}^{\mathcal Y\times\mathcal X}$ is the product $\struc{\Delta_\mathcal Y^\mathcal X}= \Delta_\mathcal Y\times\dots\times\Delta_\mathcal Y$. 
We call the elements of this set \struct{conditional probability distributions} or \struct{Markov kernels} from $\mathcal X$ to $\mathcal{Y}$. Given a Markov kernel \(Q\in \Delta_{\mathcal Y}^{\mathcal X},\) 
the conditional probability $Q(y|x)$ is the entry $Q_{yx}$. 
Note that a composition of Markov kernels is matrix multiplication.
For a probability distribution $p\in\Delta_{\mathcal{X}}$ and a Markov kernel $Q\in\Delta^{\mathcal{X}}_{\mathcal{Y}}$ we denote their composition into a joint probability distribution by $\struc{p\ast Q} \in \Delta_{\mathcal{X}\times\mathcal{Y}}$ and define it as 
$p\ast Q = \operatorname{diag}(p)Q^T$, that is, with entries $(p\ast Q)(x, y)\coloneqq p(x)Q(y|x)$. 
For a subset $A\subseteq\mathcal X$ we denote the complement $\mathcal X\setminus A$ of $A$ in $\mathcal X$ by $A^c$. 

\section{Partially observable Markov decision processes}
\label{sec: POMDP}

Partially observable Markov decision processes provide a powerful model to describe sequential decision making problems with state uncertainty. 

\begin{definition}
A finite \struct{partially observable Markov decision process} or shortly \struct{POMDP} is a tuple \(\struc{(\mathcal S, \mathcal O, \mathcal A, \alpha, \beta, r)}\), where \(\mathcal S, \mathcal O\), and \(\mathcal A\) are finite sets called the \struct{state}, \struct{observation}, and \struct{action space} respectively and \(\alpha\in\Delta_{\mathcal S}^{\mathcal S\times\mathcal A}\) and \(\beta\in\Delta_{\mathcal O}^{\mathcal S}\) are Markov kernels, which we call the \struct{transition} and \struct{observation kernel} respectively. Furthermore, we consider an \struct{instantaneous reward vector} \(r\in\mathbb R^{\mathcal S\times\mathcal A}\). We denote the cardinalities of $\SS, \AA$, and $\OO$ by $\struc{n_\SS}, \struc{n_\AA}$, and $\struc{n_\OO}$. 
\end{definition}

From a modeling perspective, $\alpha(s'|s, a)$ is the probability of transitioning from state $s$ to state $s'$ upon taking action $a$, and $\beta(o|s)$ is the probability of making the observation $o$ if the system is in state $s$. 
The entry $r_{sa}$ corresponds to an instantaneous reward received upon selecting action $a$ in state $s$. 

A \struct{(memoryless stochastic) policy} is a column-stochastic matrix \(\struc{\pi}\in\Delta_{\mathcal A}^{\mathcal O}\) from the set of observations to the set of actions. 
The entry $\pi(a|o)$ is the probability with which action $a\in\AA$ is selected given the observation $o\in\OO$. 
A policy can be interpreted as a randomized decision rule that encodes which action should be taken, based on the current observation. 
Every policy $\pi\in\Delta_\mathcal A^\mathcal O$ defines a transition kernel \(\struc{P_{\pi \circ \beta}} \in\Delta_{\mathcal S\times\mathcal A}^{\mathcal S\times\mathcal A}\) with entries 
 \[ 
 P_{\pi \circ \beta}(s^\prime, a^\prime|s, a) \coloneqq \alpha(s^\prime|s, a) (\pi\circ\beta)(a^\prime|s^\prime) ,
 \]
which we call the \struct{state-action transition kernel} associated with $\pi \circ \beta$ and $\alpha$. 
Given an initial distribution, the state-action transition kernel $P_{\pi\circ \beta}$ defines a Markov process on the state-action space $\SS\times\AA$. 

One is particularly interested in the probability that the Markov process assigns to any given state-action pair, averaged over time, whereby it is convenient to discount events at larger times $t$ by weighting them by $(1-\gamma) \gamma^t$ for a \struct{discount factor} $\struc{\gamma}\in(0,1)$. 
Given an initial state distribution \(\mu\in\Delta_{\mathcal S}\) and a discount factor $\gamma\in (0, 1)$, one thus defines the \struct{(discounted) state-action frequency} associated with policy $\pi\in\Delta^\OO_\AA$ as 
$$
\struc{\eta^\pi} \coloneqq (1-\gamma)\sum_{t\geq 0} \gamma^t P_{\pi\circ\beta}^t(\mu\ast(\pi\circ\beta)) = (1-\gamma)(I-\gamma P_{\pi \circ \beta} )^{-1} (\mu\ast(\pi\circ\beta)), 
$$
where $I$ is the identity matrix; see~\cite{derman1970finite, kallenberg1994survey}.
We further define the map 
\begin{equation}\label{eq:Phi}
 \begin{array}{rccl}
\struc{\Phi}\colon & \Delta_\mathcal A^\mathcal O & \rightarrow & \Delta_{{\mathcal S} \times {\mathcal A}} \\
 & \pi & \mapsto & \eta^\pi = (1-\gamma)(I-\gamma P_{\pi \circ \beta} )^{-1} (\mu\ast(\pi\circ\beta)). 
 \end{array}
\end{equation}
Elementary calculations show $\eta^\pi(a|s) = (\pi\circ\beta)(a|s)$. 
We denote the state-marginal of $\eta^\pi$ by $\struc{\rho^{\pi}_s} = \sum_{a\in\AA}\eta^\pi_{sa}$ and refer to it as the \struct{state frequency}. 
By definition of conditional probability distributions it holds that 
\begin{equation}
\label{eq:conditional_probability}
 \eta^\pi_{sa} = \eta^\pi(a|s)\rho^\pi_s = (\pi\circ\beta)(a|s)\rho^\pi_s . 
\end{equation}
Finally, as a measure for the performance of policies, we introduce the \struct{reward function}\footnote{
More precisely, this is the infinite-horizon expected discounted reward function.}:
\begin{equation}
    \label{eq:infinite_horizon_reward}
\struc{R(\pi)} \coloneqq \sum_{s\in\SS,a\in\AA} r_{sa} \Phi(\pi)_{sa} = \langle r, \Phi(\pi)\rangle_{\SS\times\AA}.
\end{equation}
The reward function $R$ is a widely used criterion to evaluate the performance of a policy. It is equal to the expected value 
$\mathbb E\left[(1-\gamma)\sum_{t\ge0} \gamma^t r(s_t,a_t)\right]$
of the (discounted) accumulated instantaneous rewards along state-action trajectories distributed according to the Markov process with transition kernel $P_{\pi \circ \beta}$ and initial state-action distribution $\mu\ast (\pi\circ\beta)$. 
We refer to standard textbooks for an in-depth discussion \cite{howard1960dynamic, derman1970finite, puterman2014markov}. 

We consider the following \struct{reward %maximization
optimization problem (ROP)}, which is the standard problem in (discounted) Markov decision processes: 
\begin{equation}\label{eq:rewardMaximization}
\tag{ROP}
 \operatorname{maximize} \; R(\pi) \quad \text{subject to } \pi\in\Delta_\mathcal A^\mathcal O.
\end{equation}

In this work, we focus on \textbf{deterministic observations} $\beta\in\Delta_\OO^\SS\cap\{0,1\}^{\OO\times\SS}$, where we can identify the observation kernel with a deterministic mapping $\struc{g_\beta}\colon\SS\to\OO$. 
We denote the fibers of $g_\beta$ by $\struc{S_o}\coloneqq \{s\in\mathcal S : g_\beta(s) = o\}$ and their cardinality by $\struc{d_o} \coloneqq \lvert S_o\rvert$. 
Note that the fibers $S_o$ are a disjoint partition of the states $\mathcal S$ and hence $(d_o)_{o\in\OO}$ is a partition of $n_\SS$, i.e., $\sum_{o\in\OO} d_o = n_\SS$. 
This special type of partial observability is known in the literature as state-aggregation. 

\begin{example}\label{ex:transMech_deterministic}
Let \((\mathcal S, \mathcal O, \mathcal A, \alpha, \beta, r)\) be a POMDP with state space $\SS = \{s_1, s_2,s_3\}$, action space $\AA = \{a_1,a_2\}$, and observation space $\OO=\{o_1,o_2\}$. 
We consider the (deterministic) transitions depicted in Figure~\ref{fig:transMech_deterministic}, which correspond to the column stochastic matrix 
$$ 
\alpha = 
\bordermatrix{ & \textcolor{gray}{ s_1,a_1} & \textcolor{gray}{s_1, a_2} & \textcolor{gray}{s_2, a_1} & \textcolor{gray}{s_2, a_2} & \textcolor{gray}{s_3, a_1} & \textcolor{gray}{s_3, a_2} \cr
 \textcolor{gray}{s_1} & 1 & 0 & 0 & 1 & 0 & 0 \cr
 \textcolor{gray}{s_2} & 0 & 0 & 0 & 0 & 0 & 1 \cr
 \textcolor{gray}{s_3} & 0 & 1 & 1 & 0 & 1 & 0} \in\Delta_\SS^{\SS\times\AA} . 
$$ 
Assume the agent cannot distinguish the states $s_1$ and $s_2$, so that the observation kernel is 
$$ 
\beta = 
\bordermatrix{ & \textcolor{gray}{s_1} & \textcolor{gray}{s_ 2} & \textcolor{gray}{s_3} \cr
 \textcolor{gray}{o_1} & 1 & 1 & 0 \cr
 \textcolor{gray}{o_2} & 0 & 0 & 1}\in\Delta_\OO^\SS. 
$$
Hence, we are optimizing over stochastic matrices 
\[ 
\pi = 
\bordermatrix{ & \textcolor{gray}{o_1} & \textcolor{gray}{o_ 2} \cr
 \textcolor{gray}{a_1} & \pi_{a_1o_1} & \pi_{a_1o_2} \cr
 \textcolor{gray}{a_2} & \pi_{a_2o_1} & \pi_{a_2o_2}}\in\Delta_\AA^\OO. 
 \]
Further, we consider a uniform initial distribution $\mu \in\Delta_\SS$ and a discount factor $\gamma=1/2$. 
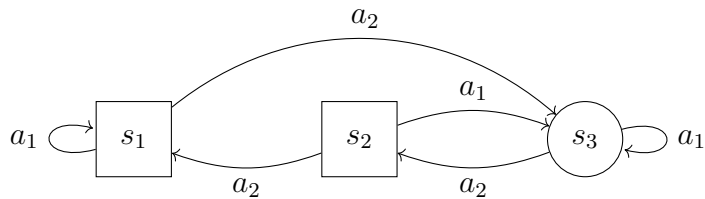
\begin{figure} 
 \centering
 \begin{tikzpicture}
 \node[shape=rectangle, draw=black, 
 minimum size=1cm] (A) at (0,0) {\(s_1\)};
 \node[shape=rectangle,draw=black,minimum size=1cm] (B) at (3,0) {\(s_2\)};
 \node[shape=circle,draw=black,minimum size=1cm] (C) at (6,0) {\(s_3\)};
 \path [->] (B) [bend right=-20] edge node[above] {\(a_1\)} (C);
 \path [->] (B) [bend left=20] edge node[below] {\(a_2\)} (A);
 \path [->] (C) [bend left=20] edge node[below] {\(a_2\)} (B);
 \path [->] (A) [bend right=-40] edge node[above] {\(a_2\)} (C);
 \path [->] (A) [loop left] edge node[left] {\(a_1\)} (A);
 \path [->] (C) [loop right] edge node[right] {\(a_1\)} (C);
\end{tikzpicture}
\captionsetup{width=.85\linewidth}
\caption{\small{Transition graph of Example~\ref{ex:transMech_deterministic}; states $s_1, s_2$ lead to observation $o_1$, and $s_3$ leads to observation $o_2$. }}
\label{fig:transMech_deterministic}
\end{figure} 
Finally, let us assume the instantaneous reward vector is $r(s,a) = \delta_{s_1s}$, 
which corresponds to a reward of $+1$ obtained in state $s_1$. 
Combining the Neumann series with Cramer's rule (see~\cite{mueller2021geometry}) one sees that the reward function $R$ is a rational function with the explicit expression $R(\pi) =  \frac{f(\pi)}{2g(\pi)} - \frac12
$, where $f$ and $g$ are determinantal polynomials given by 
\begin{align}\label{eq:rationalExpression}
 \begin{split}
f(\pi) & = \det\begin{pmatrix}
-0.49 \pi_{a_1o_1} + 0.5 \pi_{a_2o_1} + 1 & -0.99 \pi_{a_2o_2} \\ -0.5 \pi_{a_1o_1} - 0.49 \pi_{a_2o_1} & -0.99 \pi_{a_1o_2} + 1
\end{pmatrix}
\\ & = \ \scriptstyle \pi_{a_1o_1}^2\pi_{a_2o_2} -2\pi_{a_1o_1}\pi_{a_2o_1}\pi_{a_1o_2} - 2\pi_{a_2o_1}^2\pi_{a_1o_2} -\pi_{a_2o_1}^2 \pi_{a_2o_2} + 4\pi_{a_1o_1}\pi_{a_2o_1} \\
  & \quad \scriptstyle + 2 \pi_{a_1o_1}\pi_{a_1o_2} - 6 \pi_{a_1o_1}\pi_{a_2o_2} + 4\pi_{a_2o_1}^2 - 4 \pi_{a_2o_1}\pi_{a_1o_2} - 4 \pi_{a_1o_1} + 8 \pi_{a_2o_1} - 12 \pi_{a_1o_2} + 24
\end{split}
\end{align}
and 
\begin{align}
\begin{split}
    g(\pi) & = \det\begin{pmatrix} -0.99 \pi_{a_1o_1} + 1 & -0.99 \pi_{a_2o_2} \\ - 0.99 \pi_{a_2o_1} & -0.99 \pi_{a_1o_2} + 1 \end{pmatrix} \\
    & = \ \scriptstyle 3 \pi_{a_1o_1}^2 \pi_{a_2o_2} - 3 \pi_{a_2o_1}^2\pi_{a_2o_2} + 6 \pi_{a_1o_1}\pi_{a_1o_2} - 6\pi_{a_1o_1}\pi_{a_2o_2} -12 \pi_{a_1o_1} - 12 \pi_{a_1o_2} + 24.
\end{split}
\end{align}
The reward function is to be optimized over the observation policy, that is, we have
\begin{align}\label{eq:exOriginalOpt}
\operatorname{maximize} \; R(\pi) 
\quad \text{subject to }
\left\{
\begin{array}{ll}
 \pi_{oa}\geq 0 & \text{for all } o\in\mathcal{O}, a\in \mathcal{A},\\
 \sum_{a\in \mathcal{A}}\pi_{oa}=1 &  \text{for all } o\in\mathcal{O}.\\
\end{array}
\right.
\end{align}
\endexa
\end{example}

\section{The geometry of reward optimization}
\label{sec: geometry of reward optimization}
In this section, we discuss the formulation of the reward optimization problem as a polynomially constrained linear program from~\cite{mueller2021geometry}. 
For deterministic observations we provide a new description of the feasible state-action frequencies as the intersection of a product of varieties of rank-one matrices, an affine space, and the simplex (see Theorem~\ref{prop: feasible frequencies}). 

\medskip

Clearly, optimizing $R(\pi) = \langle r , \Phi(\pi) \rangle$ over 
$\Delta_\mathcal A^\mathcal O$
is equivalent to the \struct{reward maximization problem in the state-action space (ROPSA)}: 
\begin{equation}\label{eq:rewardOptimizationImplicit}
\tag{ROPSA}
 \operatorname{maximize} \;\langle r , \eta \rangle \quad \text{subject to } \eta\in\Phi(\Delta_\mathcal A^\mathcal O).
\end{equation}
By definition, the feasible set $\Phi(\Delta_\mathcal A^\OO)$ is a subset of the probability simplex $\Delta_{\SS\times\AA}$. 
Cramer's rule implies that the parametrization $\Phi$ is a rational map and hence, by the Tarski-Seidenberg theorem, the range $\Phi(\Delta_\AA^\OO)$ is semialgebraic. 
Next we discuss the solution of the implicitization problem for the parametric set $\Phi(\Delta_\AA^\OO)$ as recently given by~\cite{mueller2021geometry}, i.e., a representation of this set as the solution set to a list of polynomial (in)equalities. 

The mapping $\Phi$ can be seen as a composition $\Psi\circ f_\beta$ of a linear and non-linear map, 
illustrated in Figure~\ref{fig:range4} for the POMDP of Example~\ref{ex:transMech_deterministic}, with 
\begin{align*}
 \begin{array}{r l}
 \struc{f_\beta}\colon \Delta_\mathcal A^\mathcal O & 
 \longrightarrow \Delta_\mathcal A^\mathcal S \\
 \pi & \longmapsto \tau = \pi\circ\beta 
 \end{array} \quad \text{and}\quad
 \begin{array}{r l}
 \struc{\Psi} \colon \Delta_\mathcal A^\mathcal S & \longrightarrow \Delta_{\mathcal S\times\mathcal A} \\
 \tau & \longmapsto \eta = (1-\gamma)(I-\gamma P_\tau)^{-1} (\mu\ast \tau). 
 \end{array}
\end{align*}
\begin{figure}[]
 \centering
 \begin{tikzpicture}
 \node at (0,-.2){ 
 \begin{tikzpicture}[scale=2,axis/.style={->,dashed},thick] 
 
 \fill[
 color=MPIturquoise] (0,0) rectangle (1, 1);
 \draw (0,0) rectangle (1,1);
 \draw[axis] (0,0)--(1.5,0) node[below]{\tiny $\pi(a_1|o_1)$};
 \draw[axis] (0,0)--(0,1.5) node[right]{\tiny $\pi(a_1|o_2)$};
 \node at (.5,.5) {$\Delta^\mathcal{O}_{\mathcal{A}}$}; 
 \fill (.75,.75) circle (.5pt) node[right] {$\pi$};
 \end{tikzpicture}
 };
 \node at (5,0){ 
 \begin{tikzpicture}[scale=2, 
 axis/.style={->,dashed},thick]
 \draw[axis] (1, 0, 0) -- (1.5, 0, 0) node [below] {\tiny $\tau(a_1|s_1)$};
 \draw[axis] (0, 1, 0) -- (0, 1.5, 0) node [right] {\tiny $\tau(a_1|s_3)$};
 \draw[axis] (0, 0, 1) -- (0, 0, 1.8) node [right] {\tiny $\tau(a_1|s_2)$};
 
 \coordinate (d1) at (0,0,0){};
 \coordinate (d2) at (0,0,1){};
 \coordinate (d3) at (0,1,0){};
 \coordinate (d4) at (1,0,0){};
 \coordinate (d5) at (0,1,1){};
 \coordinate (d6) at (1,0,1){};
 \coordinate (d7) at (1,1,0){};
 \coordinate (d8) at (1,1,1){};
 
 \draw [gray,dashed] (d2)--(d1)--(d3);
 \draw [gray,dashed] (d1)--(d4);
 \fill[color=MPIturquoise] (d1)--(d3)--(d8)--(d6)--cycle; 
 \draw [] (d5)--(d3)--(d7);
 \draw [] (d6)--(d8)--(d5)--(d2)--(d6)--(d4)--(d7)--(d8);
 \draw [thin] (d1)--(d3)--(d8)--(d6)--(d1);
 
 \node at (.5,.25,.5) {$\scriptstyle f_\beta(\Delta^\mathcal{O}_{\mathcal{A}})$}; 
 \node at (1.15,1.15,0.) {$ \Delta^\mathcal{S}_{\mathcal{A}}$}; 
 \fill (.75,.75, .75) circle (.5pt) node[left] {$\tau$};
 \end{tikzpicture}

 }; 
 \node at (10.7,-.2) { 
 \begin{tikzpicture}[scale=1.8]
 \node at (0,0) {\includegraphics[width=0.25\textwidth]{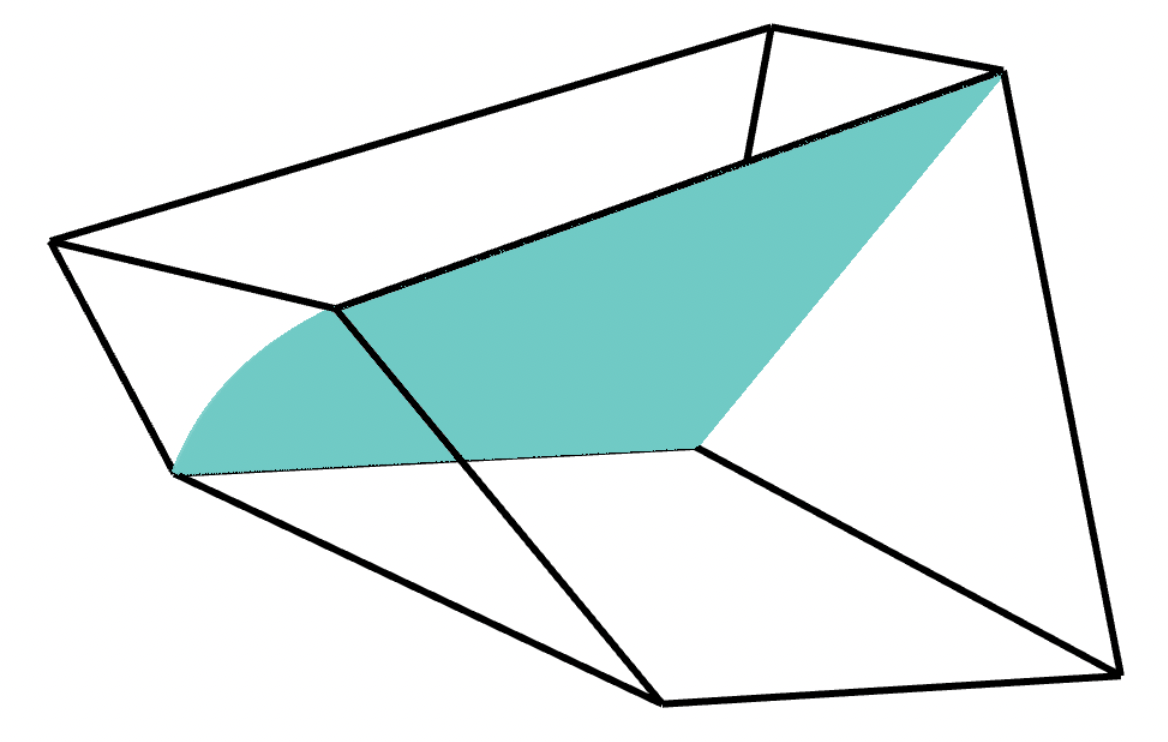}};
 \node (Nt) at (1.5,.05)  {\textcolor{MPIturquoise}{$ \Phi(\Delta^\mathcal{O}_{\mathcal{A}})$}}; 
 \node at (-.3,.75) {$ \Psi(\Delta^\mathcal{S}_{\mathcal{A}})$}; 
 \fill (-.3,.05) circle (.5pt) node[right] {$\eta$};
 \coordinate (N) at (.2,.2);
 \draw[thin] (N) -- (Nt);
 \end{tikzpicture}
 };
 \node at (2,0) {$\xrightarrow[\text{linear}]{f_\beta}$}; 
 \node at (7,0) {$\xleftrightarrow[\text{rational}]{\Psi}$};
\end{tikzpicture}
\captionsetup{width=.8\linewidth}
\caption{\small{Shown is the policy polytope $\Delta_\AA^\OO$ (left), the effective policy polytope $f_\beta(\Delta_\AA^\OO)$ within the state policy polytope $\Delta_\AA^\SS$ (middle), and the set of feasible state-action frequencies $\Phi(\Delta_\OO^\AA)$ within the state-action polytope $\Psi(\Delta^\SS_\AA)$ (right). 
Note that $\Phi(\Delta_\OO^\AA)$ is the nonlinear solution set of the constraints given in equation~\eqref{eq:exPolynomialProgramDet} 
and $\Psi(\Delta^\SS_\AA)$ is a three-dimensional polytope that is combinatorially equivalent to the cube. }} 
\label{fig:range4}
\end{figure}

We recall the following classic result. 
\begin{proposition}[The state-action polytope of Markov decision processes, \cite{derman1970finite}]
\label{prop: state_action_polytope_fully_observable}
The image $\Psi(\Delta_\AA^\SS)$ is a polytope given by $\Psi(\Delta_\AA^\SS) = \mathcal L\cap\Delta_{\SS\times\AA}$, where
\begin{equation}\label{eq:linearSpace}
 \struc{\mathcal L} \coloneqq \left\{ \eta\in\mathbb R^{\SS\times\AA} :\ \ell_s(\eta) = 0 \text{ for all } s\in\SS \right\},
\end{equation}
and $ \struc{\ell_s(\eta)} \coloneqq \sum_{a} \eta_{sa} - \gamma\sum_{s',a'} \eta_{s'a'}\alpha(s|s', a') - (1-\gamma) \mu_s$.
\end{proposition}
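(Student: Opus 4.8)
The plan is to prove the set equality $\Psi(\Delta_\AA^\SS) = \mathcal L \cap \Delta_{\SS\times\AA}$ by establishing the two inclusions separately. The polytope claim then comes for free: the right-hand side is the intersection of the affine subspace $\mathcal L$ with the simplex $\Delta_{\SS\times\AA}$, hence a bounded polyhedron, i.e.\ a polytope. The engine of both inclusions is the identity $\eta = (1-\gamma)(\mu\ast\tau) + \gamma P_\tau\eta$, obtained by clearing the resolvent in the definition of $\Psi(\tau)$, together with the explicit entries $P_\tau(s',a'|s,a) = \alpha(s'|s,a)\tau(a'|s')$.

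For the inclusion $\Psi(\Delta_\AA^\SS) \subseteq \mathcal L \cap \Delta_{\SS\times\AA}$, I fix $\tau \in \Delta_\AA^\SS$ and set $\eta = \Psi(\tau)$. Expanding the resolvent as a Neumann series gives $\eta = (1-\gamma)\sum_{t\geq 0}\gamma^t P_\tau^t(\mu\ast\tau)$; since $P_\tau$ is column-stochastic and $\mu\ast\tau \in \Delta_{\SS\times\AA}$, each $P_\tau^t(\mu\ast\tau)$ is again a probability distribution, and the weights $(1-\gamma)\gamma^t$ sum to one, so $\eta$ is a convex combination of simplex points and lies in $\Delta_{\SS\times\AA}$. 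To recover the flow constraints I sum the $(s',a')$-coordinate of $\eta = (1-\gamma)(\mu\ast\tau) + \gamma P_\tau\eta$ over $a'$; using $\sum_{a'}\tau(a'|s') = 1$ the action index collapses, and the identity reduces exactly to $\sum_{a'}\eta_{s'a'} = (1-\gamma)\mu_{s'} + \gamma\sum_{s,a}\alpha(s'|s,a)\eta_{sa}$, which is $\ell_{s'}(\eta)=0$.

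For the reverse inclusion I fix $\eta \in \mathcal L \cap \Delta_{\SS\times\AA}$, write $\rho_s = \sum_a \eta_{sa}$, and define a policy by $\tau(a|s) = \eta_{sa}/\rho_s$ whenever $\rho_s > 0$ and by an arbitrary distribution (say uniform) otherwise, so that $\tau \in \Delta_\AA^\SS$. Since $\gamma P_\tau$ has spectral radius at most $\gamma < 1$, the operator $I - \gamma P_\tau$ is invertible and $\Psi(\tau)$ is the unique solution of $\eta' = (1-\gamma)(\mu\ast\tau) + \gamma P_\tau\eta'$; it therefore suffices to check that $\eta$ itself solves this system. Computing the right-hand side coordinatewise and factoring out $\tau(a|s)$, the $(s,a)$-entry equals $\tau(a|s)\left[(1-\gamma)\mu_s + \gamma\sum_{s',a'}\alpha(s|s',a')\eta_{s'a'}\right]$, whose bracket is precisely $\rho_s$ by the constraint $\ell_s(\eta)=0$. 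Thus the entry is $\tau(a|s)\rho_s$, which equals $\eta_{sa}$ both when $\rho_s>0$ (by definition of $\tau$) and when $\rho_s=0$ (both sides vanish), giving $\Psi(\tau) = \eta$.

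The one place I expect to tread carefully is the reverse direction at states with $\rho_s = 0$, where the conditional $\eta_{sa}/\rho_s$ is undefined. The resolution is that such states carry no mass in $\eta$, so the policy there is irrelevant; the verification must go through for \emph{any} choice of $\tau(\cdot|s)$, and this is exactly what the $\rho_s=0$ case of the identity $\tau(a|s)\rho_s = \eta_{sa}$ records. Beyond this, everything reduces to bookkeeping driven by the single algebraic fact that the flow constraint $\ell_s(\eta)=0$ reproduces the state frequency $\rho_s$ as the bracketed factor.
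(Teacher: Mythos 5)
Your proof is correct and complete. The paper itself gives no proof of this proposition---it is quoted as a classic result from \cite{derman1970finite}---so there is no in-paper argument to compare against; what you give is the standard textbook proof: the forward inclusion by marginalizing the resolvent identity $\eta = (1-\gamma)(\mu\ast\tau) + \gamma P_\tau\eta$ over actions, and the reverse inclusion by conditioning, $\tau(a|s)=\eta_{sa}/\rho_s$, which is exactly the inverse map $\Gamma$ the paper records later in Proposition~\ref{prop:birational}. Your explicit handling of states with $\rho_s=0$ is the right touch: it is what makes the proposition hold without any positivity hypothesis, in contrast to Proposition~\ref{prop:birational}, where the paper must impose Assumption~\ref{ass:positivity} precisely because there the conditioning map is required to be a genuine (single-valued) inverse rather than merely one admissible choice of preimage.
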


In particular, the set of state-action frequencies $\Psi(\Delta_\AA^\SS)$ of a fully observable Markov decision process forms a polytope, referred to as the \struct{state-action polytope}. 
The constraints encoded in $\mathcal L$ describe a generalized stationarity property of the state-action frequencies, recovering stationarity in the limit where the discount factor is $\gamma=1$. 
In order to relate the space of state policies to state-action frequencies we make the following assumption.
\begin{assumption}[Positivity]\label{ass:positivity}
For every $s\in\mathcal S$ and $\pi\in\Delta_\AA^\OO$, we assume that $\sum_{a} \eta_{sa}>0$. 
\end{assumption}
This assumption is satisfied, for example, if the system is ergodic or if the initial distribution $\mu\in\Delta_\SS$ has full support, i.e., has only strictly positive entries. This can be seen by interpreting $\sum_{a}\eta_{sa}$ as a weighted average of the time spent in state $s$ when following the policy $\pi$. 
An important consequence of this assumption is that the state policies $\tau$ and the state-action frequencies $\eta$ are in one-to-one correspondence, whereby the state policies can easily be computed from the state-action frequencies by conditioning.

\begin{proposition}[\cite{mueller2021geometry}]\label{prop:birational}
Under Assumption~\ref{ass:positivity}, the mapping $\Psi\colon\Delta_\AA^\SS\to\Psi(\Delta_\AA^\SS)$ is rational and bijective with rational inverse given by conditioning
\begin{align*}
 \struc{\Gamma}\colon 
\Psi(\Delta_\AA^\SS) &\longrightarrow \Delta_\AA^\SS
 \\
 \eta &\longmapsto \tau, \quad \text{where $\tau_{as} = \frac{\eta_{sa}}{\sum_{a'}\eta_{sa'}}$}. 
\end{align*}
\end{proposition}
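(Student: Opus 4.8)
The plan is to prove three things in turn: that $\Psi$ is a rational map, that the conditioning map $\Gamma$ is a two-sided inverse of $\Psi$, and that $\Gamma$ is itself rational and well-defined on the image thanks to Assumption~\ref{ass:positivity}. Rationality of $\Psi$ and the inverse identity are logically independent, and bijectivity will follow formally from the two composition identities.

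First I would settle rationality of $\Psi$. Because $P_\tau$ is column-stochastic its spectral radius is at most $1$, so $\gamma P_\tau$ has spectral radius at most $\gamma<1$ and $I-\gamma P_\tau$ is invertible; in particular $\det(I-\gamma P_\tau)$ is a polynomial in the entries of $\tau$ that does not vanish on $\Delta_\AA^\SS$. Since the entries of $P_\tau$ are affine in $\tau$ and $\mu\ast\tau$ is linear in $\tau$, Cramer's rule expresses each coordinate of $\eta=(1-\gamma)(I-\gamma P_\tau)^{-1}(\mu\ast\tau)$ as a ratio of polynomials with this nonvanishing denominator. Hence $\Psi$ is rational; equivalently, one may expand $(I-\gamma P_\tau)^{-1}=\sum_{t\ge0}\gamma^t P_\tau^t$ as a convergent Neumann series.

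The heart of the argument is the identity $\Gamma\circ\Psi=\mathrm{id}$, i.e. $\eta_{sa}=\tau(a|s)\sum_{a'}\eta_{sa'}$ for $\eta=\Psi(\tau)$. The key structural observation is that the state-action kernel draws the next action from the policy conditioned only on the next state: for any measure $\nu$ on $\SS\times\AA$ one computes
\[
(P_\tau\nu)(s',a')=\sum_{s,a}\alpha(s'|s,a)\,\tau(a'|s')\,\nu(s,a)=\tau(a'|s')\,m(s'),\qquad m(s')=\sum_{s,a}\alpha(s'|s,a)\,\nu(s,a).
\]
Thus $P_\tau\nu$ always factorizes as $\tau(a'|s')$ times a function of the state alone. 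The initial measure $\mu\ast\tau$ already has this form, so by induction every term $P_\tau^t(\mu\ast\tau)$ factorizes as $\tau(a|s)$ times a state marginal $m_t(s)$. Summing the discounted Neumann series yields $\eta_{sa}=\tau(a|s)\,\rho_s$ with $\rho_s=(1-\gamma)\sum_{t\ge0}\gamma^t m_t(s)=\sum_{a'}\eta_{sa'}$, which is exactly the claimed identity and shows $\Gamma\circ\Psi=\mathrm{id}_{\Delta_\AA^\SS}$.

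Finally I would record that $\Gamma$ is well-defined, rational, and completes the bijection. By Assumption~\ref{ass:positivity} the denominator $\sum_{a'}\eta_{sa'}=\rho_s$ is strictly positive on $\Psi(\Delta_\AA^\SS)$, so $\Gamma$ is a ratio of linear forms with nonvanishing denominator there, hence rational; its entries are nonnegative and sum to one over $a$ for each $s$, so $\Gamma$ indeed lands in $\Delta_\AA^\SS$. Since $\Gamma\circ\Psi=\mathrm{id}$ and, writing any $\eta$ in the image as $\Psi(\tau)$, also $\Psi(\Gamma(\eta))=\Psi(\Gamma(\Psi(\tau)))=\Psi(\tau)=\eta$, the two maps are mutually inverse, so $\Psi$ is a bijection onto $\Psi(\Delta_\AA^\SS)$ with rational inverse $\Gamma$. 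The only genuinely non-formal step is the factorization identity; everything else is bookkeeping, so the main obstacle is to phrase that induction cleanly, exploiting the special form of $P_\tau$ rather than invoking the full probabilistic interpretation of $\eta$.
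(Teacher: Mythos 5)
Your proof is correct. The paper itself does not prove this proposition---it is imported from \cite{mueller2021geometry}---but your argument supplies exactly the ingredients the paper gestures at: Cramer's rule applied to $(I-\gamma P_\tau)\,\eta=(1-\gamma)\,\mu\ast\tau$ for rationality, and the factorization identity $\eta_{sa}=\tau(a|s)\,\rho_s$ (stated as ``elementary calculations'' in \eqref{eq:conditional_probability}), which you establish cleanly by induction on the terms $P_\tau^t(\mu\ast\tau)$, with Assumption~\ref{ass:positivity} then making the conditioning map $\Gamma$ a well-defined rational two-sided inverse.
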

The function $\Psi$ is defined everywhere on $\Delta^\mathcal{S}_\mathcal{A}$ and bijectively identifies the defining inequalities of the polytope $f_\beta(\Delta_\mathcal A^\mathcal O)$ within $\Delta_\AA^\SS$
with the defining inequalities of
$\Phi(\Delta^\mathcal{O}_\mathcal{A})$ within $\Psi(\Delta_\AA^\SS)$
via the pullback along $\Gamma$. 
This relates the geometry of $\Phi(\Delta_\AA^\OO)$ and $f_\beta(\Delta_\AA^\OO)$. The defining inequalities of $f_\beta(\Delta_\AA^\OO)$ can be computed algorithmically, see e.g.,~\cite{Jones:169768}. 
As we demonstrate in what follows, for deterministic observations the defining inequalities can be given in closed form.
In particular, we show the following:

\begin{theorem}[Feasible state-action frequencies]
\label{prop: feasible frequencies}
For deterministic observation $\beta$ the set of feasible state-action frequencies $\Phi(\Delta_\AA^\OO)$ is the intersection $\mathcal L\cap\mathcal X \cap\Delta_{\SS\times\AA}$ of the linear space $\mathcal{L}$ defined in~\eqref{eq:linearSpace}, the product of real determinantal varieties 
\begin{align*}
\struc{\mathcal X} \coloneqq \Big \{ \eta \in \mathbb{R}^{\SS \times A} : \ \eta_{sa}\eta_{s'a'} - \eta_{sa'} \eta_{s'a} = 0 \ \forall a, a'\in\AA \text{ and } s, s'\in \SS \ \text{with}\  g_\beta(s) = g_\beta(s')
 \Big \},
\end{align*}
and the probability simplex $\Delta_{\SS\times\AA}$. 
In particular, the only inequalities are of the form $\eta\geq0$.
\end{theorem}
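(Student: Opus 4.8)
The plan is to exploit the factorization $\Phi = \Psi\circ f_\beta$ together with the birational correspondence of Proposition~\ref{prop:birational}. First I would describe the image $f_\beta(\Delta_\AA^\OO)$ inside $\Delta_\AA^\SS$. Since $\beta$ is deterministic, composition gives $\tau_{as} = (\pi\circ\beta)(a|s) = \pi(a\mid g_\beta(s))$, so the column $\tau_{\cdot s}$ depends on $s$ only through its observation $g_\beta(s)$. Conversely, any $\tau\in\Delta_\AA^\SS$ whose columns agree on each fiber $S_o$ is the image of the unique policy reading off those common column values. Hence
\[
f_\beta(\Delta_\AA^\OO) = \left\{\tau\in\Delta_\AA^\SS : \tau_{as} = \tau_{as'} \text{ whenever } g_\beta(s) = g_\beta(s')\right\},
\]
an \emph{affine fiber-constancy} condition on $\Delta_\AA^\SS$.

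Next I would transport this condition through $\Psi$. By Proposition~\ref{prop: state_action_polytope_fully_observable} we have $\Psi(\Delta_\AA^\SS) = \mathcal L\cap\Delta_{\SS\times\AA}$, and by Proposition~\ref{prop:birational} (using Assumption~\ref{ass:positivity}) the map $\Psi$ is a bijection onto this polytope with inverse the conditioning map $\Gamma$, where $\Gamma(\eta)_{as} = \eta_{sa}/\rho_s$ and $\rho_s = \sum_{a'}\eta_{sa'} > 0$. Therefore
\[
\Phi(\Delta_\AA^\OO) = \Psi\big(f_\beta(\Delta_\AA^\OO)\big) = \left\{\eta\in\mathcal L\cap\Delta_{\SS\times\AA} : \Gamma(\eta)\in f_\beta(\Delta_\AA^\OO)\right\},
\]
so the entire task reduces to rewriting the fiber-constancy of $\Gamma(\eta)$ as a polynomial condition on $\eta$.

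The heart of the argument is the equivalence I would establish next: for states $s,s'$ in a common fiber, the equalities $\eta_{sa}/\rho_s = \eta_{s'a}/\rho_{s'}$ for all $a\in\AA$ hold if and only if the rows $(\eta_{sa})_a$ and $(\eta_{s'a})_a$ are proportional, which is in turn exactly the vanishing of every $2\times2$ minor $\eta_{sa}\eta_{s'a'} - \eta_{sa'}\eta_{s'a}$. Concretely, these minors vanishing for all $s,s'\in S_o$ and all $a,a'$ says that the submatrix $(\eta_{sa})_{s\in S_o,\,a\in\AA}$ has rank at most one, i.e.\ $\eta\in\mathcal X$; and since the minor conditions are scale invariant, normalizing the proportional rows by their strictly positive marginals $\rho_s$ (positivity by Assumption~\ref{ass:positivity}) recovers precisely the equality of conditionals, with proportionality constant $\rho_s/\rho_{s'}$ forced by summing over $a$.

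Combining the two inclusions then finishes the proof: if $\eta = \Phi(\pi)$ then $\Gamma(\eta) = f_\beta(\pi)$ is fiber-constant, whence $\eta\in\mathcal L\cap\mathcal X\cap\Delta_{\SS\times\AA}$; conversely, if $\eta\in\mathcal L\cap\mathcal X\cap\Delta_{\SS\times\AA}$ then $\tau := \Gamma(\eta)\in\Delta_\AA^\SS$ is fiber-constant, so $\tau = f_\beta(\pi)$ for some $\pi\in\Delta_\AA^\OO$, and $\eta = \Psi(\tau) = \Phi(\pi)$. The main obstacle I anticipate is the bookkeeping in the central equivalence: correctly matching the row/column pattern of the minors (all action pairs, same-fiber state pairs) to the rank-one structure, and invoking positivity exactly where it is needed so that $\Gamma$ is well defined and the proportionality factor is pinned down by the marginals rather than left ambiguous.
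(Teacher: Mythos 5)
Your proposal is correct and follows essentially the same route as the paper: factor $\Phi = \Psi\circ f_\beta$, describe $f_\beta(\Delta_\AA^\OO)$ by the linear fiber-constancy equations, pull these back along the conditioning map $\Gamma$ (Propositions~\ref{prop: state_action_polytope_fully_observable} and~\ref{prop:birational}), and identify the resulting conditions with the vanishing of the $2\times 2$ minors defining $\mathcal X$, using the strictly positive marginals $\rho_s$ to pin down the proportionality constant. The only cosmetic difference is that the paper phrases the pulled-back constraints as the anchored polynomials $p^o_{sa}$ (comparing each state to a fixed $s_o\in S_o$, which it reuses in Corollary~\ref{cor:Y}) and then proves the two inclusions between that variety and $\mathcal X$ inside $\mathcal L\cap\Delta_{\SS\times\AA}$, whereas you argue the equivalence pairwise over all states in a fiber directly.
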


We call $\mathcal L\cap\mathcal X$ the \struct{state-aggregation variety}.
Note that $\mathcal{X}$ is determined by the condition that for every observation $o$ the $d_o\times n_\AA$ submatrix $ \left( \eta_{sa} \right)_{ s \in S_o,\; a \in \AA }$ of $\eta$, consisting of all entries $\eta_{sa}$ with $\beta(s) = o$,
has rank one.
In particular,
the projective variety associated to $\mathcal X$ is a join of Segre varieties.
 
\begin{proof}[Proof of Theorem~\ref{prop: feasible frequencies}]
Proposition \ref{prop: state_action_polytope_fully_observable} provides a description of the polytope
$\Psi(\Delta_\AA^\SS)$
as the intersection 
$\mathcal L\cap \Delta_{\SS\times\AA}$
so we are left with finding defining equations for $\Psi(f_\beta(\Delta_\AA^\OO))$ 
in $\Psi(\Delta_\AA^\SS)$. 
To do this, observe that the polytope $f_\beta(\Delta_\AA^\OO)$ consists of those elements
$\tau\in\Delta_\AA^\SS$ satisfying the linear equations
$
\tau_{as} - \tau_{as'} 
$
for all $a \in \AA$, $s,s' \in S$ such that $g_\beta(s)=g_\beta(s')$.
In other words, all columns of $\tau$ indexed by states with equal observations coincide. 
Fix an action $a_o\in\AA$ and a state $s_o\in S_o$ for each observation $o\in\OO$. 
Then the non-redundant defining equalities of $f_\beta(\Delta_\AA^\OO)$ are given by 
\[ 
l^o_{sa}(\tau) \coloneqq \tau_{as} - \tau_{as_o} = 0 , 
\]
for all observations $o\in\OO$, actions $a\in\AA\setminus\{a_o\}$, and states in the fiber $s\in S_o\setminus\{s_o\}$. 
These equations determine the range of $f_\beta$ as a function $\mathbb{R}^{\AA\times\OO}\to\mathbb{R}^{\AA\times\SS}$ (corresponding to the set $\mathcal{U}$ in \cite[Theorem 12]{mueller2021geometry}). 
After applying the pullback $\Gamma^{\ast}$ of the conditioning map $\Gamma$ to the linear functions $l_{sa}^o$ we get the rational equations
\begin{align}\label{eq: rational_cond_on_eta}
(\Gamma^{\ast}l^o_{sa})(\eta) = l^o_{sa}(\Gamma(\eta)) = \eta_{sa}\left(\sum_{a'\in \mathcal A} \eta_{sa'}\right)^{-1} - \eta_{s_oa}\left(\sum_{a'\in \mathcal A} \eta_{s_oa'}\right)^{-1} = 0, 
\end{align}
which we rephrase as the vanishing of the polynomials
\begin{align}\label{eq:definingPolynomialsDeterministic}
\struc{p^o_{sa}(\eta)} \coloneqq \eta_{sa} \sum_{a'\in \mathcal A} \eta_{s_oa'} - \eta_{s_oa} \sum_{a'\in \mathcal A} \eta_{sa'} = \sum_{a'\in\AA\setminus\{a\}}(\eta_{sa} \eta_{s_oa'} - \eta_{s_oa} \eta_{sa'}).
\end{align}
These are defining polynomial equations of $\Psi(f_\beta(\Delta_\AA^\OO))$ in $\Psi(\Delta_\AA^\SS)$.
Let now $\mathcal W$ be the variety determined by the equations $\eqref{eq: rational_cond_on_eta}$.
It remains to show $\mathcal X\cap\mathcal L\cap\Delta_{\SS\times\AA} = \mathcal W\cap\mathcal L\cap\Delta_{\SS\times\AA}$.
Since $ p^o_{sa}$ is a linear combination of $2\times 2$ minors, we have the inclusion $\mathcal{X} \subseteq \mathcal W$.
On the other hand, equation 
\eqref{eq: rational_cond_on_eta} implies the linear dependence of the two vectors
    \[ (\eta_{sa})_{a }, \ (\eta_{s_oa})_a \in \mathbb{R}^\AA \]
for every observation $o$ and state $s \in S_o$.
Consequently, every $2\times 2$ minor in the definition of $\mathcal{X}$ vanishes on $\mathcal W\cap\mathcal L\cap\Delta_{\SS\times\AA}$.
This shows the desired inclusion
    \[ 
    \mathcal W\cap\mathcal L\cap\Delta_{\SS\times\AA} \subseteq \mathcal X\cap\mathcal L\cap\Delta_{\SS\times\AA} , 
    \]
which finishes the proof. 
\end{proof}

Hence by Theorem~\ref{prop: feasible frequencies}, in the case of a deterministic observation kernel all inequalities are linear and the equalities are either linear or $2\times2$ principal minors. 
This is in contrast to the case of general observation kernels, where nonlinear defining inequalities appear and the polynomial constraints might be of higher degree (see \cite[Theorem 16]{mueller2021geometry}). 
Since all defining equalities of $\mathcal{X}$ are binomial, it is a toric variety. 
The following monomial parametrization of $\mathcal{X}$ can be inferred from the discussion of the family of state-frequencies and equation \eqref{eq:conditional_probability}: 
\begin{align*}
 \mathbb{R}^{\SS} \times \mathbb{R}^{\AA \times \OO} &\longrightarrow \mathcal{X} \\ (\rho, \pi) &\longmapsto \eta, \quad \text{where\; $\eta(s,a) = \pi(a|g_\beta(s)) \rho(s)$}. 
\end{align*}
The following characterization of the set of feasible state-action frequencies with fewer equations will be useful later. 

\begin{cor}[Alternative characterization of feasible state-action frequencies]\label{cor:Y} 
For deterministic observation $\beta$, fix an arbitrary action $a_o\in\AA$
and an arbitrary state $s_o\in S_o$ for every $o\in\OO$. 
The set of feasible state-action frequencies $\Phi(\Delta_{\mathcal A}^\mathcal O)$ can be described as the intersection $\mathcal L\cap\mathcal Y\cap\Delta_{\mathcal S\times\mathcal A}$, where 
\[ 
\struc{\mathcal Y} \coloneqq \left\{ \eta\in\mathbb R^{\mathcal S\times\mathcal A} : \ p^{o}_{sa}(\eta) = 0 \text{ for all } o\in\OO, a\in\AA\setminus\{a_o\}, s\in S_o\setminus\{s_o\} \right\}, 
\]
and the polynomials $p_{sa}^{o}$ are given in~\eqref{eq:definingPolynomialsDeterministic}. 
$\mathcal Y$ is a complete intersection of these polynomials. 
\end{cor}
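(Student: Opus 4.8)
The plan is to treat the statement as two separate assertions: the set-theoretic identity $\Phi(\Delta_\AA^\OO)=\mathcal L\cap\mathcal Y\cap\Delta_{\SS\times\AA}$, and the claim that the polynomials $p^o_{sa}$ cut out $\mathcal Y$ as a complete intersection. The first assertion needs essentially no new work. In the proof of Theorem~\ref{prop: feasible frequencies} the auxiliary variety $\mathcal W$ is defined exactly by the equations~\eqref{eq: rational_cond_on_eta}, i.e.\ by $p^o_{sa}(\eta)=0$ for all $o\in\OO$, $a\in\AA\setminus\{a_o\}$ and $s\in S_o\setminus\{s_o\}$, so that $\mathcal W=\mathcal Y$. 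That proof already establishes $\mathcal W\cap\mathcal L\cap\Delta_{\SS\times\AA}=\mathcal X\cap\mathcal L\cap\Delta_{\SS\times\AA}$, and combined with Theorem~\ref{prop: feasible frequencies} this yields $\Phi(\Delta_\AA^\OO)=\mathcal X\cap\mathcal L\cap\Delta_{\SS\times\AA}=\mathcal Y\cap\mathcal L\cap\Delta_{\SS\times\AA}$. I would simply record this identification.

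For the complete-intersection statement I would first use that the defining polynomials decouple across observations. Writing $R_s\coloneqq\sum_{a\in\AA}\eta_{sa}$, each $p^o_{sa}=\eta_{sa}R_{s_o}-\eta_{s_oa}R_s$ involves only the coordinates $\eta_{s'a'}$ with $s'\in S_o$. Since the fibers $S_o$ partition $\SS$, the ambient space factors into disjoint coordinate blocks $\mathbb R^{\SS\times\AA}=\prod_{o\in\OO}\mathbb R^{S_o\times\AA}$, and correspondingly $\mathcal Y=\prod_{o\in\OO}\mathcal Y_o$, where $\mathcal Y_o\subseteq\mathbb R^{S_o\times\AA}$ is cut out by the $(d_o-1)(n_\AA-1)$ polynomials $p^o_{sa}$. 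As a product of varieties in complementary coordinates is a complete intersection exactly when each factor is, it suffices to analyse a single block, for which I must show that the $(d_o-1)(n_\AA-1)$ polynomials $p^o_{sa}$ form a regular sequence, matching the expected total codimension $\sum_{o}(d_o-1)(n_\AA-1)$.

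The dimension count is the heart of the matter. On the chart $\{R_{s_o}\neq0\}$ — which contains the entire feasible region by the Positivity Assumption~\ref{ass:positivity} — each equation $p^o_{sa}=0$ reads $\eta_{sa}=\lambda_s\,\eta_{s_oa}$ for $a\in\AA\setminus\{a_o\}$, where $\lambda_s\coloneqq R_s/R_{s_o}$. Summing these identities over $a\neq a_o$ and using $R_s=\lambda_sR_{s_o}$ forces $\eta_{sa_o}=\lambda_s\eta_{s_oa_o}$ as well, so every row $(\eta_{sa})_a$ is the scalar multiple $\lambda_s$ of the distinguished row $(\eta_{s_oa})_a$; equivalently, $\mathcal Y_o$ restricts on this chart to the locus of rank-one matrices. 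That locus is irreducible and parametrized by the distinguished row ($n_\AA$ parameters, of nonzero sum) together with the scalars $\lambda_s$ for $s\in S_o\setminus\{s_o\}$ ($d_o-1$ parameters), hence has dimension $d_o+n_\AA-1=d_on_\AA-(d_o-1)(n_\AA-1)$. Being irreducible of exactly the expected codimension and cut out by that many polynomials in a smooth ambient chart, it exhibits the $p^o_{sa}$ as a regular sequence and $\mathcal Y_o$ as a complete intersection there.

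The step I expect to be delicate is the purity of dimension underlying the regular-sequence claim. Krull's height theorem gives for free the easy inequality that every irreducible component of $\mathcal Y_o$ has codimension at most $(d_o-1)(n_\AA-1)$; the substance is the matching upper bound on dimension, i.e.\ the absence of components of smaller codimension. This is precisely where the chart $\{R_{s_o}\neq0\}$ is needed: on the complementary locus $\{R_{s_o}=0\}$, which the feasible region never meets, the equations degenerate and $\mathcal Y_o$ can acquire spurious components — for instance the coordinate subspace on which the entire distinguished row $s_o$ vanishes — so the clean complete-intersection statement is the one valid on the chart containing the feasible set. Accordingly, the proportionality argument identifying $\mathcal Y_o$ with the rank-one variety on that chart is the crux, and I would present it as the decisive technical point.
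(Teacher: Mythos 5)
Your proposal is correct, and it supplies an argument where the paper gives essentially none: the paper's entire proof of this corollary is the sentence ``This follows directly from the proof of Theorem~\ref{prop: feasible frequencies}.'' For the set-theoretic identity the two routes coincide --- the auxiliary variety $\mathcal W$ in that proof is cut out precisely by the polynomials $p^o_{sa}$, i.e.\ $\mathcal W=\mathcal Y$, so $\Phi(\Delta_\AA^\OO)=\mathcal X\cap\mathcal L\cap\Delta_{\SS\times\AA}=\mathcal Y\cap\mathcal L\cap\Delta_{\SS\times\AA}$ is immediate, exactly as you record. For the complete-intersection claim, however, the paper asserts and never argues it, while you prove a correctly qualified version: the block decomposition $\mathcal Y=\prod_o\mathcal Y_o$ across fibers, the identification of $\mathcal Y_o$ on the chart $\{\sum_a\eta_{s_oa}\neq0\}$ with the rank-one locus of dimension $d_o+n_\AA-1$ (your summation trick recovering $\eta_{sa_o}=\lambda_s\eta_{s_oa_o}$ is the right way to see this), and the Cohen--Macaulay fact that an ideal of height $c$ generated by $c$ elements is generated by a regular sequence. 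All of these steps are sound.

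Your caveat is moreover substantive rather than cosmetic: read as a statement about the affine variety $\mathcal Y\subseteq\mathbb R^{\SS\times\AA}$, the corollary's final sentence is false in general, and your analysis shows why. The coordinate subspace on which the entire row $(\eta_{s_oa})_{a\in\AA}$ vanishes is contained in $\mathcal Y_o$ and has codimension $n_\AA$, whereas the number of defining quadrics is $(d_o-1)(n_\AA-1)$; already for $d_o=n_\AA=3$ this gives a component of codimension $3$ cut out by $4$ equations, so $\mathcal Y_o$ is not equidimensional of the expected codimension and cannot be a complete intersection. What is true --- what your chart argument establishes, and what the paper actually relies on later for the non-redundancy count in the proof of Theorem~\ref{theo: algebraic_degree_of_subproblem} --- is that the $p^o_{sa}$ form a regular sequence on the chart $\{\sum_a\eta_{s_oa}\neq0 \text{ for all } o\in\OO\}$, which contains the feasible set by Assumption~\ref{ass:positivity}. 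So your proof recovers the paper's intended content, localizes it to where it is valid, and in doing so repairs a gap that the paper's one-line proof glosses over.
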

\begin{proof}
This follows directly from the proof of Theorem~\ref{prop: feasible frequencies}.
\end{proof}

\begin{example}
\label{ex:transMech_deterministic2}
We continue Example~\ref{ex:transMech_deterministic} from above. 
The defining (in)equalities of $\Phi(\Delta_\AA^\OO)$, described in ~\eqref{eq:linearSpace} and Theorem~\ref{prop: feasible frequencies}, take the form 
\begin{align*}
 \mathcal L = \{\eta\in \mathbb R^{3\times 4} : \ &
 3\eta_{s_1a_1} + 6\eta_{s_1a_2} - 3\eta_{s_2a_2} - 1 =0, \ 6\eta_{s_2a_1} + 6\eta_{s_2a_2} - 3\eta_{s_3a_2} - 1 =0, \\ 
 & 3\eta_{s_3a_1} + 6\eta_{s_3a_2} - 3\eta_{s_1a_2} - 3\eta_{s_2a_1} - 1 =0
 \}
\end{align*}
and
    \[ \mathcal X = \{\eta \in \mathbb R^{3\times 4}: \ \eta_{s_1a_1}\eta_{s_2a_2} - \eta_{s_1a_2} \eta_{s_2a_1} = 0 \}. \]
Thus, the reward optimization problem~\eqref{eq:rewardOptimizationImplicit} is
\begin{align}\label{eq:exPolynomialProgramDet}
    \operatorname{maximize} \; \eta_{s_1a_1} + \eta_{s_1a_2} 
    \quad \text{subject to }
    \left\{
    \begin{array}{rl}
        3\eta_{s_1a_1} + 6\eta_{s_1a_2} - 3\eta_{s_2a_2} - 1 = 0\;\; \\
        6\eta_{s_2a_1} + 6\eta_{s_2a_2} - 3\eta_{s_3a_2} - 1 = 0\;\; \\
        3\eta_{s_3a_1} + 6\eta_{s_3a_2} - 3\eta_{s_1a_2} - 3\eta_{s_2a_1} - 1 = 0\;\; \\
        \eta_{s_1a_1}\eta_{s_2a_2} - \eta_{s_1a_2} \eta_{s_2a_1} = 0\;\; \\
        \eta_{s_1a_1}, \eta_{s_1a_2}, \eta_{s_2a_1}, \eta_{s_2a_2}, \eta_{s_3a_1}, \eta_{s_3a_2} \ge 0. 
\end{array}
\right.
\end{align}
The feasible set of this optimization problem is shown on the right in Figure~\ref{fig:range4}. 
Comparing this to the optimization problem over the policy polytope~\eqref{eq:exOriginalOpt} with objective function~\eqref{eq:rationalExpression}, 
now the constraints are more complex and nonlinear but the objective is linear. 
\endexa
\end{example}

\section{Combinatorial and algebraic complexity of the problem}
\label{sec: complexity of the problem}

In this section we study the number of critical points of the reward optimization problem in the case of deterministic observations. 
We apply methods from polynomial optimization and in particular the theory of algebraic degrees developed in~\cite{Nie2009AlgebraicDO} to obtain upper bounds on the number of complex critical points. 
A similar approach was pursued in~\cite{mueller2021geometry} for the case of invertible observation matrix $\beta$, in which case there are linear equations and polynomial inequalities. 

\medskip
The description of $\Phi(\Delta_\mathcal A^\mathcal O)$ obtained in Corollary~\ref{cor:Y}
allows to reformulate the reward optimization problem~\eqref{eq:rewardOptimizationImplicit} as the following constrained polynomial optimization problem:  
\begin{equation}\label{eq:polyFormRewMax}
\tag{POP}
 \operatorname{maximize} \;\langle r, \eta \rangle \quad \text{subject to } 
 \left\{
\begin{array}{rl}
 \ell_s(\eta) = 0 & \text{for } s\in\mathcal S,\\
 p_{sa}^o(\eta) = 0 & \text{for } o\in \OO, a\in\AA\setminus\{a_o\}, s\in S_o\setminus\{s_o\}, \\
 \eta_{sa} \ge 0 & \text{for } s\in\SS, a\in\AA,
\end{array}
\right.
\end{equation}
where the linear constraints $\ell_s$ are given in Proposition~\ref{prop: state_action_polytope_fully_observable}, the polynomial constraints $p_{sa}^o(\eta)$ are provided in~\eqref{eq:definingPolynomialsDeterministic} taking a fixed action $a_o\in\AA$ and a fixed state $s_o\in S_o$ for each observation $o\in\OO$, and the inequality constraints simply ensure the entries of $\eta$ being nonnegative. 
Observe that problem~\eqref{eq:polyFormRewMax} is in fact a 
quadratically constrained linear program.

We bound the number of critical points individually for each boundary component of the feasible set. 
A boundary component consists of all feasible points for which a given subset of the inequality constraints are active. 
The boundary components of the feasible set $\Phi(\Delta_\AA^\OO)$ are in one-to-one correspondence with the faces of $\Delta_{\mathcal A}^{\mathcal O}$ according to
\begin{equation}\label{eq:boundaryComponent}
    \left\{\pi\in \Delta_{\mathcal A}^{\mathcal O} :\ \pi(a|o)=0 \ \forall a\in A_o, o\in\OO
\right\} \longleftrightarrow 
\left\{\eta \in\Phi(\Delta_\AA^\OO) : \ \eta_{sa} = 0 \text{ for } a\in A_{g_\beta(s)}\right\} , 
\end{equation} 
where $A_o$ is a proper subset of $\AA$ for every $o\in\OO$, and $g_\beta(s)$ is the observation associated with state $s$. 
In particular, there is a boundary component associated to each tuple $(A_o)_{o\in\mathcal O}$ with $A_o\subsetneq \mathcal{A}$, $o\in\mathcal{O}$. 

We point out the following result, which allows us to ignore high-dimensional boundary components when searching for a maximizer of the reward. 
Recall that for an observation $o\in\mathcal O$, the cardinalities of the fibers of $g_\beta$ are denoted by $d_o=\lvert S_o\rvert$. 

\begin{theorem}[Existence of maximizers in low dimensional faces, \cite{montufar2017geometry}]
\label{theo: location_of_maximizers}
There exist $A_o\subsetneq \AA$ with $\lvert A_o^c\rvert\le d_o$, $o\in\OO$, such that the set $B$ described in \eqref{eq:B} contains a (globally optimal) solution of the problem~\eqref{eq:polyFormRewMax}. \end{theorem}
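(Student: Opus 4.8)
The plan is to deduce the statement from the support bound of \cite{montufar2017geometry} after translating it into the boundary-component language of \eqref{eq:boundaryComponent}. First I would record that a global maximizer of \eqref{eq:polyFormRewMax} exists, since the reward $\langle r,\eta\rangle$ is continuous and the feasible set $\Phi(\Delta_\AA^\OO)=\mathcal L\cap\mathcal X\cap\Delta_{\SS\times\AA}$ is compact (Theorem~\ref{prop: feasible frequencies}). Via the correspondence \eqref{eq:boundaryComponent}, the boundary component $B$ on its right-hand side associated with a tuple $(A_o)_{o\in\OO}$ collects exactly those feasible $\eta$ whose associated policy $\pi$ satisfies $\operatorname{supp}\pi(\cdot|o)\subseteq A_o^c$ for every $o$; here we use that $\eta_{sa}=\rho_s\,\pi(a|g_\beta(s))$ with $\rho_s>0$ by Assumption~\ref{ass:positivity}, so $\eta_{sa}=0$ if and only if $\pi(a|g_\beta(s))=0$. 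Hence the assertion is equivalent to the existence of a globally optimal policy $\pi^\ast$ whose per-observation action support obeys $\lvert\operatorname{supp}\pi^\ast(\cdot|o)\rvert\le d_o$ for all $o\in\OO$: given such a $\pi^\ast$ one sets $A_o^c$ to be $\operatorname{supp}\pi^\ast(\cdot|o)$ (or any superset of it of size at most $d_o$) and $A_o=\AA\setminus A_o^c$, which is proper since $\pi^\ast(\cdot|o)$ has nonempty support.

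The mechanism behind the bound is that the action distribution $\pi(\cdot|o)$ at a single observation influences the state-action transition kernel $P_{\pi\circ\beta}$ and the initial distribution $\mu\ast(\pi\circ\beta)$ only through the $d_o$ states in the fiber $S_o$: the entries depending on $x=\pi(\cdot|o)$ are exactly those attached to a next-state $s'\in S_o$, and each such row is a scalar multiple $x_{a'}$ of a vector that depends only on $s'$. Consequently the induced perturbation of $I-\gamma P_{\pi\circ\beta}$ has rank at most $d_o$, and by the Woodbury identity the reward, viewed as a function of $x$ with all other components frozen, is a ratio $N(x)/\Delta(x)$ whose denominator is a $d_o\times d_o$ determinant. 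I would exploit this rank-$d_o$ structure through the following linearization: fixing the state frequencies $\rho^\ast=\rho^{\pi^\ast}$ turns $\eta_{sa}=\rho^\ast_s\,\pi(a|g_\beta(s))$ into a reparametrization that is linear in the policy, so that both the objective $\langle r,\eta\rangle$ and the balance equations $\ell_s$ become linear in $(\pi(\cdot|o))_{o\in\OO}$. The optimization thereby reduces to a linear program over a polytope inside $\Delta_\AA^\OO$ whose optimal vertices carry controlled support by the usual vertex/Carath\'eodory count; localizing this count to the fiber $S_o$ is what is meant to produce the bound $d_o$.

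The main obstacle is the global coupling introduced by the stationarity constraints $\mathcal L$: changing $\pi(\cdot|o)$ alters $\ell_s$ for many states $s$ simultaneously, so one cannot reduce the support at a single observation with everything else held fixed and remain feasible. A naive global vertex count that fixes all of $\rho^\ast$ at once only yields a bound on the \emph{total} support of order $n_\SS+n_\OO$, rather than the sharp \emph{per-observation} bound $d_o$. Overcoming this requires localizing the argument to one fiber at a time while allowing the remaining frequencies to re-adjust along $\mathcal L$, and tracking that the only feasibility- and reward-relevant degrees of freedom contributed by $\pi(\cdot|o)$ are the $d_o$ ones carried by $S_o$; this is precisely the content established in \cite{montufar2017geometry}, from which the theorem follows. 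Finally I would verify that the resulting $(A_o)_{o\in\OO}$ indeed satisfy $A_o\subsetneq\AA$ and $\lvert A_o^c\rvert\le d_o$, so that the corresponding set $B$ contains the constructed global optimizer.
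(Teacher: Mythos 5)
Your proposal is correct and follows essentially the same route as the paper: the paper offers no independent proof of this theorem but imports it from \cite{montufar2017geometry}, and your argument likewise defers the essential per-observation support bound $\lvert\operatorname{supp}\pi^\ast(\cdot|o)\rvert\le d_o$ to that reference. The translation you supply --- that under Assumption~\ref{ass:positivity} one has $\eta_{sa}=0$ if and only if $\pi(a|g_\beta(s))=0$, so that via \eqref{eq:boundaryComponent} the cited support bound is equivalent to a globally optimal solution lying in $B$ with $A_o^c=\operatorname{supp}\pi^\ast(\cdot|o)$ --- is exactly the implicit dictionary the paper relies on, and the intermediate Woodbury/linearization sketch is not load-bearing (as you yourself note), so its incompleteness creates no gap.
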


\begin{remark}\label{rem:relevantBC}
One approach to solving~\eqref{eq:polyFormRewMax} is to solve the critical equations over every boundary component and then selecting the critical point with the highest objective value. 
According to Theorem~\ref{theo: location_of_maximizers} there is a lower-dimensional boundary component that contains a global maximizer. 
This implies that, instead of considering the critical points in all $(2^{ n_\AA }-1)^{ n_\OO}$ boundary components, it is enough to consider those in the boundary components with $A_o\subsetneq \AA$ satisfying $\lvert A_o\rvert\ge n_\AA - d_o$. 
This reduces the number of boundary components 
that need to be checked to 
 \[ 
 \prod_{o\in\OO}\left( \sum_{l_o = \max(n_\AA-d_o, 0)}^{n_\AA-1} \binom{n_\AA}{l_o} \right), 
 \] 
which we call \struct{relevant} boundary components. 
Note that this number only depends on the number of actions $n_\AA$ and $d_o$ (the cardinality of the fibers of $g_\beta$).
\end{remark}

\subsection{Bounds via algebraic degrees of polynomial optimization}
With the description of the boundary components of the feasible set at hand, we can deduce upper bounds on the number of critical points over each of them based on the degrees of the defining equations and the degree of the objective function. 
\begin{theorem}[Bound on the algebraic degree] 
\label{theo: algebraic_degree_of_subproblem} 
Consider a POMDP with deterministic observations. 
Fix $A_o\subsetneq\AA$ for every $o\in\OO$ and set $n\coloneqq n_\SS n_\AA - n_\SS - \sum_o d_o\lvert A_o\rvert$ and $m\coloneqq \sum_o(d_o - 1)(\lvert A_o^c\rvert - 1)$, where we assume $n$ is not zero. 
Then the number of critical points of the linear function $\eta\mapsto \langle r, \eta\rangle$ over 
\begin{equation}\label{eq:B} 
\struc{B} \coloneqq \{\eta \in\mathcal L\cap\mathcal X : \ \eta_{sa} = 0 \text{ for  } a\in A_{g_\beta(s)}\} 
\end{equation} 
is upper bounded by $2^m \binom{n-1}{m-1}$. 
\end{theorem}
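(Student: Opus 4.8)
The plan is to restrict attention to the fixed boundary component $B$, reduce the optimization there to that of a linear functional over a complete intersection of quadrics in an affine space of dimension exactly $n$, and then invoke the theory of algebraic degrees of polynomial optimization \cite{Nie2009AlgebraicDO}. The first step is to pin down the ambient affine space. On $B$ the coordinates $\eta_{sa}$ with $a\in A_{g_\beta(s)}$ vanish identically, so the free coordinates are the $\eta_{sa}$ with $a\in A_{g_\beta(s)}^c$, of which there are $\sum_{o\in\OO} d_o\lvert A_o^c\rvert = n_\SS n_\AA - \sum_o d_o\lvert A_o\rvert$, using $\sum_o d_o = n_\SS$ and $\lvert A_o^c\rvert = n_\AA - \lvert A_o\rvert$. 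Intersecting with $\mathcal L$ imposes the $n_\SS$ equations $\ell_s$; assuming these remain linearly independent after restriction to the boundary, they reduce the dimension by $n_\SS$, leaving an affine space of dimension exactly $n = n_\SS n_\AA - n_\SS - \sum_o d_o\lvert A_o\rvert$. Verifying this independence, so that the number of ambient variables is genuinely $n$, is the first thing I would check.

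Next I would exhibit $B$ inside this affine space as a complete intersection of quadrics. By Theorem~\ref{prop: feasible frequencies} together with Corollary~\ref{cor:Y}, the rank-one conditions defining $\mathcal X$ are presented, observation by observation, through the polynomials $p^o_{sa}$. Since $A_o\subsetneq\AA$ we may choose for each $o$ a reference action $a_o\in A_o^c$ and a reference state $s_o\in S_o$. Restricting $p^o_{sa}$ to the boundary only deletes the monomials involving the vanishing coordinates and keeps the polynomials quadratic, and the surviving equations are indexed by $a\in A_o^c\setminus\{a_o\}$ and $s\in S_o\setminus\{s_o\}$. For each $o$ this is a complete-intersection description of the variety of rank-$\le 1$ matrices of format $d_o\times\lvert A_o^c\rvert$, contributing $(d_o-1)(\lvert A_o^c\rvert-1)$ quadrics; summing over $o$ gives precisely $m$ quadratic equations cutting out $B$ as a codimension-$m$ complete intersection.

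I would then apply the algebraic-degree formula. For a generic linear objective over a variety defined by generic polynomials of degrees $d_1,\dots,d_m$ in $n$ variables, the number of complex critical points equals
\[
d_1\cdots d_m\sum_{\substack{i_1+\cdots+i_m=n-m \\ i_j\ge 0}}\prod_{j=1}^m (d_j-1)^{i_j} .
\]
Specializing all $d_j = 2$, the product $d_1\cdots d_m$ becomes $2^m$, every factor $(d_j-1)^{i_j}$ equals $1$, and the sum reduces to the number of nonnegative integer solutions of $i_1+\cdots+i_m=n-m$, which is $\binom{n-1}{m-1}$. This produces the value $2^m\binom{n-1}{m-1}$ claimed in the statement.

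Finally, there is the passage from the generic count to an upper bound for our specific problem: the quadrics are the special determinantal polynomials $p^o_{sa}$ and $\langle r,\cdot\rangle$ is a fixed, non-generic linear functional, so the hypotheses of the formula are not literally satisfied. I would close the argument by a specialization and upper-semicontinuity argument, using that the generic algebraic degree is an intersection-theoretic count on the associated Lagrange (conormal) variety that bounds from above the number of isolated complex critical points of any specialization, provided no critical points escape to infinity. I expect this last point—certifying that the generic degree remains a valid upper bound for our special instance, rather than being undercut by excess components or points at infinity—to be the main obstacle, while the dimension and quadric counts of the earlier steps are essentially bookkeeping.
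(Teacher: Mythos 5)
Your proposal follows essentially the same route as the paper's proof: the same dimension count giving the ambient affine space of dimension $n$, the same choice of references $a_o\in A_o^c$ and $s_o\in S_o$ to isolate the $m = \sum_o(d_o-1)(\lvert A_o^c\rvert-1)$ non-redundant quadrics $p^o_{sa}$, and the same specialization of the Nie--Ranestad algebraic-degree formula to a linear objective and degree-two constraints, yielding $2^m\binom{n-1}{m-1}$. The ``main obstacle'' you flag at the end---that the generic count must be shown to bound the number of critical points for the special determinantal quadrics and the fixed $r$---is precisely what the paper disposes of by citing Corollary~2.5 of~\cite{Nie2009AlgebraicDO} together with Theorem~2.2, so the semicontinuity argument you sketch is already packaged in the cited result.
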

\begin{proof}
Recall from Corollary \ref{cor:Y} that $\Phi(\Delta_\AA^\OO)$ is defined in $\mathbb{R}^{\SS\times\AA}$ as an intersection of $n_\SS $ linear equations, $\sum_o(d_o-1)(n_\AA-1)$ quadratic equations of the form \eqref{eq:definingPolynomialsDeterministic}, and the linear inequalities $\eta\ge0$. 
It is not difficult to see that for any choice of $A_o\subsetneq \AA$, $o\in\OO$, 
the linear equations $\ell_s(\eta)=0$, $s\in\SS$ and $\eta_{sa}=0$, $a\in A_o$, $s\in S_o$, $o\in\OO$ are linearly independent. 
On the set $B$ given in~\eqref{eq:B} there are $\sum_o d_o\lvert A_o\rvert$ active linear inequalities with 
$A_o\subsetneq \AA$ for each $o\in\OO$
, and hence $B$ is contained in an affine space of dimension $n= n_\SS n_\AA - n_\SS -\sum_o d_o\lvert A_o\rvert$. 
Further, given these linear equations, the quadratic equations 
 \[ 
 p^o_{sa}(\eta) = \eta_{sa} \sum_{a'\in \mathcal A} \eta_{s_oa'} - \eta_{s_oa} \sum_{a'\in \mathcal A} \eta_{sa'} = 0 
 \]
are redundant for all $a\in A_o$, $s\in S_o$. 
By choosing $a_o\in A_o^c$ in Corollary~\ref{cor:Y} for every $o\in\OO$ there remain $ n_\AA - \lvert A_o\rvert - 1$ non-redundant quadratic equalities for every $s\in S_o\setminus\{s_o\}$. 
Therefore, we get $m = \sum_o (d_o-1)(\lvert A_o^c\rvert - 1)$ non-redundant quadratic equalities. 
By Theorem~2.2 and Corollary~2.5 in~\cite{Nie2009AlgebraicDO} the algebraic degree for the optimization of the linear function $r\in\mathbb R^{\SS\times\AA}$ over an $n$-dimensional affine space subject to $m$ non-redundant quadratic constraints is upper bounded by $2^m\binom{n-1}{m-1}$. 
\end{proof}

With Theorem~\ref{theo: algebraic_degree_of_subproblem} we can provide upper bounds for the number of critical points of the optimization problem~\eqref{eq:polyFormRewMax}. Indeed, the number of critical points over the interior 
\begin{equation}\label{eq:intB}
    \{\eta \in\mathcal L\cap\mathcal X : \ \eta_{sa} = 0 \text{ for all } a\in A_{g_\beta(s)}, \eta_{sa} > 0 \text{ otherwise }\}
\end{equation}
of a boundary component is clearly upper bounded by the number of critical points over $B$ defined in~\eqref{eq:B}. This bound over the individual boundary components can be summed to obtain an upper bound on the number of critical points of the polynomial optimization problem~\eqref{eq:polyFormRewMax} (see also~\cite{Nie2009AlgebraicDO}). Note that the Zariski closure of the interior of a boundary component defined in~\eqref{eq:intB} is contained in $B$ but might be a strict subset.
Similarly, a bound on the number of critical points over the relevant boundary components can be established.

\begin{remark}[Tighter bounds via polar degrees]
Since the problem~\eqref{eq:polyFormRewMax} has a linear objective, {under weak assumptions} the number of critical points over every boundary component is upper bounded by the polar degree of the associated variety. This approach may yield tighter bounds as demonstrated in the special case of a blind controller with two 
actions, i.e., a system with one observation and two actions in~\cite{mueller2021geometry}.
The authors obtain an upper bound linear in $n_\SS$ compared to the exponential upper bound of $ n_\SS \cdot 2^{ n_\SS -1} + 2$ that follows from Theorem~\ref{theo: algebraic_degree_of_subproblem}. 
A refinement of Theorem~\ref{theo: location_of_maximizers} for the case of mean rewards was also presented in \cite[Theorem~2]{montufar2019task}, which would be worthwhile studying from an algebraic standpoint.
\end{remark}

\subsection{Evaluation of the bounds}  

In Table~\ref{table:bounds} we present the upper bounds on the number of critical points for problems of different size. 
We compare the bound on the total number of critical points obtained by iterating Theorem~\ref{theo: algebraic_degree_of_subproblem} over all boundary components and the one iterating only over the relevant components described in Theorem~\ref{theo: location_of_maximizers}. In addition, we report the total and relevant number of boundary components discussed in Remark~\ref{rem:relevantBC}. 
Both, the number of boundary components and the upper bound on the number of critical points, depend on $n_\SS, n_\AA$, and the tuple $(d_o)_{o\in\OO}$. 
The two extreme cases for the tuple $(d_o)_{o\in\OO}$, namely $(n_\SS)$ and $(1, \ldots, 1)$, correspond to a \struct{blind controller}, i.e., all states map to the same observation, and the \struct{fully observable case}, i.e., states and observations are in one-to-one correspondence, respectively. 
The bounds are independent of the specific $\alpha$, so long as Assumption~\ref{ass:positivity} is satisfied.

\begin{center}
\begin{table}[ht]
\centering
\footnotesize 
\renewcommand{\arraystretch}{1.1}
\begin{tabular}{|M{0.8cm}|M{0.8cm}|M{2.6cm}|M{2cm}M{2cm}|M{2cm}M{2cm}|}
 \hline 
\multirow{2}{*}{$n_\SS$} & \multirow{2}{*}{$n_\AA$} & \multirow{2}{*}{\parbox{2.6cm}{\centering partitions of $n_\SS$: $(d_o)_{o\in\OO}$}} & \multicolumn{2}{M{4cm}|}{Number of boundary components} & \multicolumn{2}{M{4cm}|}{Bound on number of critical points} \\ \cline{4-7} 
& & & \multicolumn{1}{c}{total} & relevant & \multicolumn{1}{c}{total} & relevant \\ \hline
\multicolumn{1}{|c|}{\multirow{3}{*}{$3$}} & \multicolumn{1}{c|}{\multirow{3}{*}{$2$}} & $(3)$ & 3 & 3 & 10 & 10 \\ 
 \multicolumn{1}{|c|}{} & \multicolumn{1}{c|}{} & $(2,1)$ & 9 & 6 & 10 & 8 \\ 
 \multicolumn{1}{|c|}{} & \multicolumn{1}{c|}{} & $(1,1,1)$ & 27 & 8 & 8 & 8 \\ 
 \hline
 \multicolumn{1}{|c|}{\multirow{5}{*}{$4$}} & \multicolumn{1}{c|}{\multirow{5}{*}{$3$}} & $(4)$ & 7 & 7 & 1419 & 1419 \\ 
 \multicolumn{1}{|c|}{} & \multicolumn{1}{c|}{} & $(3,1)$ & 49 & 21 & 2237 & 561 \\ 
 \multicolumn{1}{|c|}{} & \multicolumn{1}{c|}{} & $(2,2)$ & 49 & 36& 1265 & 153 \\ 
 \multicolumn{1}{|c|}{} & \multicolumn{1}{c|}{} & $(2,1,1)$ & 343 & 54& 1189 & 81 \\ 
 \multicolumn{1}{|c|}{} & \multicolumn{1}{c|}{} & $(1,1,1,1)$ & 2401 & 81& 81 & 81 \\ 
 \hline
 \multicolumn{1}{|c|}{\multirow{7}{*}{$5$}} & \multicolumn{1}{c|}{\multirow{7}{*}{$3$}} & $(5)$ & 7 & 7 & 9411 & 9411 \\ 
 \multicolumn{1}{|c|}{} & \multicolumn{1}{c|}{} & $(4,1)$ & 49 &21 & 23745 & 4257 \\ 
 \multicolumn{1}{|c|}{} & \multicolumn{1}{c|}{} & $(3,2)$ & 49 & 42& 13431 & 4371 \\ 
 \multicolumn{1}{|c|}{} & \multicolumn{1}{c|}{} & $(3,1,1)$ & 343 & 63& 24363 & 1683 \\  \multicolumn{1}{|c|}{} & \multicolumn{1}{c|}{} & $(2,2,1)$ & 343 &108 & 12159 & 459 \\ 
 \multicolumn{1}{|c|}{} & \multicolumn{1}{c|}{} & $(2,1,1,1)$ & 2401 & 162& 9195 & 243 \\ 
 \multicolumn{1}{|c|}{} & \multicolumn{1}{c|}{} & $(1,1,1,1,1)$ & 16807 & 243& 243 & 243 \\
 \hline
\end{tabular}
\vspace{0.2cm}
\caption{\label{table:bounds}
Listed are the number of boundary components and the upper bound on the number of critical points from Theorem~\ref{theo: algebraic_degree_of_subproblem} both over all boundary components and over the subset of relevant boundary components from Theorem~\ref{theo: location_of_maximizers} for problems of different size. }
\end{table}
\end{center}

In these examples we observe that restricting to the relevant boundary components significantly reduces the upper bound. 
This is reflected in the last two columns in the table. 
The difference is most notable when the fibers of $g_{\beta}$ have a small cardinality, i.e., only few states lead to the same observation. 
In the fully observable case, the relevant boundary components correspond to the vertices of $\Delta_{\mathcal{A}}^{\mathcal{O}}$. 
This is consistent with the fact that in the fully observable case the feasible set $\Phi(\Delta_\AA^\OO)$ is a polytope~\cite{derman1970finite} and hence the optimization problem~\eqref{eq:polyFormRewMax} is a linear program, for which the solutions are attained at the vertices. 
On the other hand, in the case of a blind controller (with a single observation $o$), all boundary components are relevant since $d_o=n_\SS$.

\section{Numerical methods for the optimization of decision rules}\label{sec: optimizing decision rules} 

In POMDPs reward optimization over the set of memoryless stochastic policies~\eqref{eq:rewardMaximization} is known to be hard in theory (NP-hard \cite{vlassis2012computational}) and also difficult in practice as the reward function $R$ is nonconvex and has sub-obtimal strict local optima~\cite{poupart2011analyzing,bhandari2019global}. 
In this section we discuss how the geometric description of reward optimization facilitates computational approaches based on numerical algebra. 
We derive polynomial systems for the critical points, globally from the Karush-Kuhn-Tucker (KKT) conditions, and separately for each boundary component of $\Phi(\Delta_\AA^\OO)$ from the Lagrangian criterion. 
For different choices of $n_\SS$, $n_\AA$, and generic data (i.e., generic $\alpha, \mu$ and $r$), we compute the complex and real solutions of the KKT and Lagrangian systems, and compare the number of solutions with the theoretical upper bounds established in Section~\ref{sec: complexity of the problem}. 
Finally, we compare these approaches with other popular methods from constrained optimization: the interior point solver \texttt{Ipopt} and convex relaxations via the moment-SOS-approach. 

\subsection{Critical equations and computation} 
\paragraph{The KKT critical point equations} 
A standard approach for constrained optimization problems are the KKT conditions~\cite{KKT}, which provide necessary conditions of stationary points under certain regularity conditions; see e.g.~\cite{Abadie:KKT,bertsekas1997nonlinear,Bazaraa:Sherali:Shetty:KKT}. 
If both the constraints and objective are polynomial, the KKT conditions form a polynomial system, which can be solved using various numerical algebraic methods. 
 
Applied to our problem, the KKT conditions reduce to the following polynomial system in $\eta\in\mathbb R^{\SS\times\AA}_{\ge0}$
 with multipliers $\struc{\lambda}\in\mathbb R^\SS, \struc{\nu^o_{sa}}\in\mathbb R, \struc{\kappa}\in\mathbb R_{\geq 0}^{\SS\times\AA}$: 
\begin{align}\label{eq:KKT conditions}
 \begin{split}
 \text{Primal feasibility: } \quad & \ell_s(\eta) = 0 \text{ for } s\in\SS, \\ 
 & p_{sa}^o(\eta) = 0 \text{ for } o\in \OO, a\in\AA\setminus\{a_o\}, s\in S_o\setminus\{s_o\}, \\
 \text{Complementary slackness: }\quad & \kappa_{s_oa}\eta_{s_oa} = 0 \text{ for all } s_o, a, \\
 \text{Stationarity: }\quad & r +
 \sum_{s} \lambda_s\nabla \ell_s(\eta) + \sum_{o, s, a} \nu_{sa}^o \nabla p^o_{sa}(\eta)
 + \kappa = 0, 
 \end{split}
\end{align} 
where $a_o\in\AA$ and $s_o\in S_o$ for every $o\in\mathcal O$ are fixed arbitrarily. 
Here we have included the primal feasibility $\eta_{sa} \geq 0$ for $s\in\SS, a\in\AA$ and the dual feasibility $\kappa_{sa} \geq0$ for $s\in\SS, a\in\AA$ in the definition of the search space for $\eta$ and $\kappa$. 

The number of linear constraints $\ell_s$ 
is $n_\SS$, 
while the number of polynomial constraints 
$p^o_{sa}$ is $(n_\AA-1)\sum_{o\in\OO} (d_o-1) = (n_\AA-1)(n_\SS-n_\OO)$. 
Due to the symmetry of the effective policies, 
there are only $n_\OO n_\AA$ inequalities $\eta_{s_oa} \geq 0$ for each $a\in\AA, o\in\OO$. 
Hence the dimension of the square KKT system~\eqref{eq:KKT conditions} is 
 \[ n_\SS n_\AA + n_\SS + (n_\AA - 1)(n_\SS - n_\OO) +n_\OO n_\AA = 2n_\SS n_\AA + n_\OO. \]

In this setting, we can verify that the linear independence constraint qualification is satisfied. 
Given an element $\eta^*$ in the feasible set $\Phi(\Delta_\AA^\OO)$, 
it suffices to verify the linear independence of the gradients of the active inequality constraint functions and the equality constraints at $\eta^*$. 
Notice that under the pullback along the birational morphism $\Psi^{-1}$ the constraints are identified with affine-linear functions. 
Checking the linear independence of their gradients can be done by counting the dimension of the faces. \\ 

\paragraph{The Lagrange critical point equations over boundary components} 
Alternatively to solving the KKT system, one can compute the critical equations given by the Lagrange criterion over every boundary component individually. 
If there are no inequality constraints, the KKT equations specialize to the Lagrange multiplier equations. 
Consider a boundary component $B$ in~\eqref{eq:B} for a choice of $A_o\subsetneq\AA$ for every $o\in\OO$, and consider the optimization problem over $B$. 
This amounts setting $\eta(s,a) = 0$ for $a\in A_o$ whenever $g_\beta(s) = o, o\in\mathcal O$, which reduces optimization to a subspace of $\mathbb R^{\SS\times\AA}$. We denote the new primal variables by $\struc{\hat\eta}$. 
Similarly, we denote the restriction of $\ell_s$ and $p^{o}_{sa}$ to this space by $\struc{\hat{\ell}_s}$ and $\struc{\hat{p}^{o}_{sa}}$ and the projection of $r$ onto this space (i.e., the vector obtained by dropping the indices which are set to zero in $\eta$) by $\struc{\hat r}$. In the lower dimensional variables $\hat \eta$ for a given $B$ the Lagrange system becomes 
\begin{align}\label{eq:Lagrange equations}
 \begin{split}
 \text{Feasability: } \quad & \hat{\ell}_s(\hat\eta) = 0 \text{ for } s\in\SS, \\ 
 & \hat{p}_{sa}^o(\hat\eta) = 0 \text{ for } o\in \OO, a\in\AA\setminus\{a_o\}, s\in S_o\setminus\{s_o\}, 
 \\
  \text{Stationarity: }\quad & \hat r +
  \sum_{s} \lambda_s\nabla \hat{\ell}_s(\hat\eta) + \sum_{o, s, a} \nu_{sa}^o \nabla \hat{p}^o_{sa}(\hat\eta)
 = 0,
 \end{split}
 \end{align}
 where $a_o\in A_o^c$ and $s_o\in S_o$ are fixed arbitrarily for every $o\in\mathcal O$. 
The dimension of the primal variable $\hat\eta$ is $n_\SS n_\AA - \sum_{o} d_o \lvert A_o\rvert$, the dimension of the Lagrange multipliers $\lambda$ is $n_\SS$ and of $\nu%_{sa}^{o}
$ is $\sum_o(d_o-1)(\lvert A_o^c\rvert-1)$ (see also proof of Theorem~\ref{theo: algebraic_degree_of_subproblem}). Overall, the Lagrange system~\eqref{eq:Lagrange equations} is a square polynomial system of dimension
\[ 
2n_\SS n_\AA - (n_\AA-1) n_\OO - \sum_{o} (2d_o-1)\lvert A_o\rvert . 
\]

\begin{remark}[Lagrange vs KKT system] 
\label{remark:LagrangevsKKT}
It is easy to see that every real solution of the KKT system satisfying the primal and dual inequality constraints $\eta\ge0, \kappa\ge0$
is a solution of the Lagrange system over a boundary component, namely the boundary component defined by the zeros of $\eta$; see Figure~\ref{fig:KKTvsLagrange} for an illustrated example of this situation. 
When solving the KKT system~\eqref{eq:KKT conditions}, usually one solves the system of equations without the nonnegativity conditions $\eta\ge0$ and $\kappa\ge0$ and then selects the nonnegative solutions. 
Note that every solution of the Lagrange system over a boundary component appears as the solution of the KKT system without the nonnegativity constraints. 
Hence, solving the KKT system gives at least as many solutions as solving the Lagrange system over every boundary component. 
\begin{figure}
    \centering
    \begin{tikzpicture}[scale=0.8]
    \begin{scope}[overlay, scale=2]
     \clip [saveuse path={plot p1}{plot[smooth, samples=100, domain=-2.1:2.1] (\x, -.7 - .6*\x*\x + .3*\x*\x*\x*\x)}]
     %-- (-5,5) -- (5,5) -- cycle 
     ;
     \clip [saveuse path={plot p1.1}{plot[smooth, samples=100, domain=-2.1:2.1] (\x, .7 +  .1*\x*\x)}]
     -- (5,-5) -- (-5,-5) -- cycle 
     ;
     \clip [saveuse path={plot p3}{ plot[smooth, samples=100, domain=-2.1:2.1, variable=\y] (-\y*\y+1.5, \y)}]% -- (0,5) -- (0,-5) -- cycle 
     ;
     \clip [saveuse path={plot p3}{ plot[smooth, samples=100, domain=-2.1:2.1, variable=\y] (\y*\y-1.5, \y)}]% -- (0,5) -- (0,-5) -- cycle 
     ;

    \clip (0,0) circle [radius=3.] [draw, preaction={draw,fill=gray!50}]
    ;
  
\end{scope}
    
      \draw[->, line width=0.5] (0, 3) -- (0, 4) node[right]{} 
      ;
     
      \draw[scale=2, domain=-2.1:2.1, smooth, variable=\x, line width=0.5] plot ({\x}, 
      {0.3*(\x-1)*(\x-1)*(\x+1)*(\x+1) - 1});
      \draw[scale=2, domain=-4.4:4.4, smooth, variable=\x, line width=0.5]  plot ({\x}, {.7 +  .1*\x*\x});
      \draw[scale=2, domain=-1.5:2.45, smooth, variable=\y, line width=0.5]  plot ({\y*\y-1.5}, {\y});
      \draw[scale=2, domain=-1.5:2.45, smooth, variable=\y, line width=0.5]  plot ({-\y*\y+1.5}, {\y});
      \filldraw[scale=2, green] (0,-.7) circle (0.04);
      \filldraw[black, scale=2] (-1,-1) circle (0.04);
      \filldraw[black, scale=2] (1,-1) circle (0.04);
      \filldraw[blue, scale=2] (0,1.22) circle (.04);
      \filldraw[blue, scale=2] (0,-1.22) circle (.04);
      \filldraw[black, scale=2] (-2.02,1.87) circle (0.04);
      \filldraw[black, scale=2] (2.02,1.87) circle (0.04);
       \filldraw[green, scale=2] (-0.69,-0.91) circle (0.04);
       \filldraw[green, scale=2] (0.69,-0.91) circle (0.04);
      \filldraw[red, scale=2] (0.,0.7) circle
      (0.04);
      \filldraw[red, scale=2] (-0.9,0.78) circle
      (0.04);
      \filldraw[red, scale=2] (0.9,0.78) circle
      (0.04);
      \filldraw[blue, scale=2] (-1.9,1.06) circle
      (0.04);
      \filldraw[blue, scale=2] (1.9,1.06) circle
      (0.04);
      \filldraw[black, scale=2] (4.08,2.36) circle (0.04);
      \filldraw[black, scale=2] (-4.08,2.36) circle (0.04);
\end{tikzpicture}  

\caption{Schematic illustration of the feasible region (gray) and objective gradient (arrow) of a polynomially constrained linear program showing (i) the solutions of the KKT system checking only primal ($\eta\ge0$) inequality constrains (red and green); and checking primal ($\eta\ge0$) and dual ($\kappa\ge0$) inequality constrains  (red), (ii) the solutions of the KKT system without checking inequalities (red, green, black and blue), (iii) the positive ($\eta\ge0$) solutions of the Lagrange systems over all boundary components (red and green)
(iv) all solutions of the Lagrange systems over all boundary components (red, green and black). } 
\label{fig:KKTvsLagrange}
\end{figure}
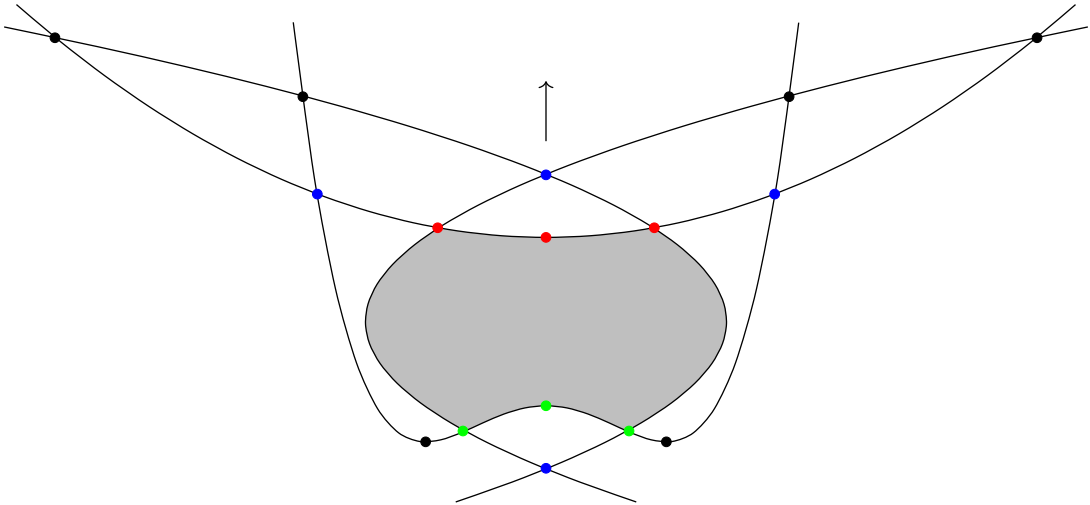

\end{remark}

\paragraph{Computation} 
The optimization problem \eqref{eq:polyFormRewMax} can be solved using several methods: 
\begin{itemize}[leftmargin=*] 
\item First, we use the numerical algebra package \texttt{HomotopyContinuation.jl}~\cite{HomotopyContinuation} to solve the KKT system~\eqref{eq:KKT conditions} and the Lagrange system~\eqref{eq:Lagrange equations} of each boundary component. 
This automatically certifies the results \cite{breiding2021certifying}, meaning that for every returned solution it is guaranteed that there exists a unique true solution in a small neighborhood. 
From the returned solutions to the critical equations, we then just need to select the real ones that satisfy the primal inequality constraints $\eta_{s,a}\geq 0$, and among them the one that has the maximum objective value. 
\item 
Alternatively, we solve a convex relaxation of the polynomial optimization problem. Namely we relax the problem to a semidefinite program (SDP) via the moment-SOS-approach that is implemented in the freeware \texttt{GloptiPoly3}~\cite{gloptipoly}, and solve the SDPs using the numerical solver \texttt{Mosek}; see~\cite{DA2021} for details. 
We note that \texttt{GloptiPoly3} builds upon a hierarchy of moment/SOS programs (also called Lasserre hierarchy), which allows to approximate the optimal value arbitrarily close, and can be used to test optimality and extract global optimizers. We use this key feature to check if our methods reach global optimality. 
\item 
We may also solve the constrained optimization problem~\eqref{eq:polyFormRewMax} using the interior point solver \texttt{Ipopt}~\cite{wachter2006implementation}, which is a local optimization method for large-scale nonlinear optimization, an approach recently pursued in~\cite{mueller2022rosa}. 
\end{itemize} 
 
\subsection{Experiments}
\paragraph{Description of the experiments}
We test our computational approach on random POMDPs of different sizes. 
To this end we first specify the number of states $n_\SS$, the number of actions $n_\AA$, and the number of states aggregated in each observation $(d_o)_{o\in\OO}$ with $\sum_o d_o=n_\SS$. 
For each specification of these values, we generate $20$ random problems as follows. 
We sample the initial state distribution $\mu$ and the transition probabilities $\alpha(\cdot|s,a)$, $(s,a)\in\SS\times\AA$ from a uniform distribution on the simplex $\Delta_{\SS}$, 
and sample the instantaneous reward vector $r\in\mathbb R^{\SS\times\AA}$ from a standard Gaussian distribution. 
We use the same random data for each of the two approaches, KKT and Lagrange over boundary components.

\newcommand{\pa}[1]{(#1)}
\newcommand{\sd}[1]{\!\!\raisebox{.2ex}{\scriptsize\textcolor{gray}{$\pm#1$}}} 

\begin{table}[h]
\centering
\footnotesize

\def\arraystretch{1.1}
\setlength{\tabcolsep}{2pt}
\begin{tabular}{|c|c|c|c c c|c c c|c c c|} 
\hline
 \multirow{2}{*}{$n_\SS$} & \multirow{2}{*}{$n_\AA$} & \multirow{2}{*}{$(d_o)_{o\in\OO}$} & \multicolumn{3}{c|}{\begin{tabular}[c]{@{}c@{}}KKT  
 \end{tabular}} & \multicolumn{3}{c|}{Lagrange (all)} & \multicolumn{3}{c|}{\begin{tabular}[c]{@{}c@{}}Lagrange  (relevant)\end{tabular}} \\ 
 \cline{4-12} 
 & & & complex\!\! & real & positive & complex\!\! & real & positive & complex\!\! & real & positive 
 \\ \hline
\multirow{3}{*}{$3$} & \multirow{3}{*}{$2$} & \pa{3} & $6$ \sd{0} & 4.4 \sd{1.2} & 2.1 \sd{0.3} & 6 \sd{0} & 4.4 \sd{1.2} & 2.1 \sd{0.3} & 6 \sd{0} & 4.4 \sd{1.2} & 2.1 \sd{0.3} \\
 &  & \pa{2,1} & 12 \sd{0} & 10.1 \sd{1.9} & 4.25 \sd{0.44} & 10 \sd{0} & 8.2 \sd{1.9} & 4.25 \sd{0.44} & 8 \sd{0} & 6.7 \sd{1.6} & 4.25 \sd{0.44} \\
 &  & \pa{1,1,1} & 20 \sd{0} & 20 \sd{0} & 8 \sd{0} & 8 \sd{0} & 8 \sd{0} & 8 \sd{0} & 8 \sd{0} & 8 \sd{0} & 8 \sd{0} \\ \hline 
\multirow{5}{*}{$4$} & \multirow{5}{*}{$3$} & \pa{4} & 45 \sd{0} & 17.1 \sd{4.3} & 4.3 \sd{1.3} & 45 \sd{0} & 17.1 \sd{4.3} & 4.3 \sd{1.3} & 45 \sd{0} & 17.1 \sd{4.3} & 4.3 \sd{1.3} \\
& & \pa{3,1} & 150 \sd{0} & 79 \sd{11} & 11 \sd{1.9} & 129 \sd{0} & 68.7 \sd{9.7} & 11 \sd{1.9} & 81 \sd{0} & 41.6 \sd{8.5} & 10.9 \sd{1.8} \\
& & \pa{2,2} & 281.6 \sd{0.75} & 154 \sd{16} & 13.9 \sd{4.7} & 263 \sd{0} & 136 \sd{16} & 13.9 \sd{4.7} & 153 \sd{0} & 89 \sd{10} & 13.65 \sd{4.3} \\
& & \pa{2,1,1} & 381.2 \sd{0.7} & 292 \sd{23} & 31.5 \sd{4.3} & 216 \sd{0} & 168 \sd{16} & 31.5 \sd{4.3} & 81 \sd{0} & 68 \sd{11} & 30.9 \sd{4.0} \\
& & \pa{1,1,1,1} & 495 \sd{0} & 495 \sd{0} & 81 \sd{0} & 81 \sd{0} & 81 \sd{0} & 81 \sd{0} & 81 \sd{0} & 81 \sd{0} & 81 \sd{0} \\ \hline
\multirow{6}{*}{$5$} & \multirow{6}{*}{$3$} & \pa{5} & 71 \sd{0} & 21.4 \sd{6} & 3.7 \sd{0.98} & 71 \sd{0} & 21.4 \sd{6} & 3.7 \sd{0.98} & 71 \sd{0} & 21.4 \sd{6} & 3.7 \sd{0.98} \\
& & \pa{3,2} & 637.95 \sd{0.76} & 219 \sd{28} & 12.60 \sd{2.9} & 626 \sd{0} & 213 \sd{29} & 12.6 \sd{2.9} & 477 \sd{0} & 171 \sd{24} & 12.6 \sd{2.9} \\
& & \pa{4,1} & 269.85 \sd{0.49} & 99 \sd{20} & 11.9 \sd{3.3} & 234 \sd{0} & 87 \sd{18} & 11.9 \sd{3.3} & 144 \sd{0} & 52 \sd{13} & 11.55 \sd{2.6} \\
& & \pa{3,1,1} & 881.95 \sd{0.22} & 436 \sd{68} & 36 \sd{10} & 558 \sd{0} & 285 \sd{47} & 36 \sd{10} & 243 \sd{0} & 117 \sd{20} & 35.3 \sd{9.2} \\
& & \pa{2,2,1} & 1717.3 \sd{2.5} & 890 \sd{49} & 35.6 \sd{5.3} & 1260 \sd{0} & 624 \sd{56} & 36.5 \sd{7.1} & 459 \sd{0} & 244 \sd{25} & 35.7 \sd{6.6} \\
& & \pa{2,1,1,1} & 2269.9 \sd{3.9} & 1712 \sd{142} & 89 \sd{12} & 810 \sd{0} & 624 \sd{74} & 89.3 \sd{12.3} & 243 \sd{0} & 195 \sd{37} & 88.1 \sd{9.5} \\
& & \pa{1,1,1,1,1} & 3002.9 \sd{0.31} & 3002.9 \sd{0.3} & 243 \sd{0} & 243 \sd{0} & 243 \sd{0} & 243 \sd{0} & 243 \sd{0} & 243 \sd{0} & 243 \sd{0} \\ \hline
\end{tabular}
\caption{
\label{table:numberCriticalPointsKKT}
\label{table:critPointsLagrange}
Mean and standard deviation of the number of solutions of the KKT system~\eqref{eq:KKT conditions}, the Lagrange system~\eqref{eq:Lagrange equations} over all boundary components, and the Lagrange system over the relevant boundary components, 
for 20 random POMDPs with the indicated number of states $n_\SS$, actions $n_\AA$, and state aggregation partition $(d_o)_{o\in\OO}$. 
In our setting, positive solutions are feasible solutions. 
}
\end{table}

\paragraph{Discussion of the results} 

In this section we discuss the experimental results on the number of solutions obtained by solving the KKT and Lagrange systems introduced above. In Table~\ref{table:numberCriticalPointsKKT} we report the average and standard deviation of the number of complex, real, and positive solutions returned in each case. Note that in our setting, positive solutions (i.e., solutions satisfying $\eta\ge0$) are (primal) feasible solutions. We also compare the performance and the computational times of these methods with
convex relaxations and interior point methods.

\medskip
Following the discussion in Remark \ref{remark:LagrangevsKKT}, we start by comparing the number of solutions of the KKT and the Lagrange systems. In Table~\ref{table:numberCriticalPointsKKT} we see that the KKT system has at least as many complex solutions as the Lagrange systems over all boundary components. 
This is consistent with our previous discussion, since, as we have pointed out, any solution of the Lagrange system over a boundary component is a solution of KKT. 
Moreover, we observe that KKT and Lagrange over all boundary components have in general the same number of positive solutions (see Remark \ref{remark:LagrangevsKKT} and Figure \ref{fig:KKTvsLagrange}). 

\medskip
It is also worth noting the difference between the number of complex, real, and positive solutions. That is, in Table~\ref{table:numberCriticalPointsKKT} we observe in general that for the three types of systems there is a drop between the number of complex solutions and the number of real and positive solutions. 
However, we find an exception to this in the Lagrange system for fully observable systems {($d_o=(1,\ldots,1)$)}, where the number of complex, real, and positive solutions coincide. The reason for this is that in this case all boundary components are affine spaces, so only the zero-dimensional boundary components have a solution, and these correspond precisely to the $n_\mathcal{A}^{n_\mathcal{S}}$ vertices of the feasible set. 

We also observe that the number of complex solutions has a much smaller variance than the number of real real or positive ones. 
This is expected, since choosing the coefficients of polynomial systems randomly gives the same number of complex solutions with probability one. In fact, the number of complex solutions for the Lagrange system has no variance across the different random parameters. Still, we see a small variance in the number of complex KKT solutions, which we attribute to numerical instability which can prevent the software package \texttt{HomotopyContinuation.jl} from finding all solutions to the KKT system. 
In contrast to the complex case, the variance on the number of real and positive solutions is not due to numerical errors. This is a typical phenomenon in polynomial systems, and one of the possible limitations of classic algebraic methods when one wants to estimate the number of real solutions of a system.

\medskip
In the following we compare the experimental results presented in Table~\ref{table:numberCriticalPointsKKT} with the theoretical upper bounds shown in Table~\ref{table:bounds} and highlight two particular facts. 
{First notice that} in most cases the theoretical bound is significantly larger than the number of solutions of the Lagrange system. 
{Moreover, this} gap becomes particularly pronounced for problems where the fibers of $g_\beta$ are large. 
This clearly indicates that there is a {discrepancy} between the theoretical bounds and the algebraic degree of the optimization problem. Indeed, our bounds are based on theory for generic polynomials and hence we do not expect that they provide a tight estimate of the algebraic degree for the particular polynomials we are dealing with. 
{Here we also observe a particular behavior in} the case of fully observable systems where {the number of critical points of the} Lagrange {systems} agree with our bounds. 
On the other hand, we see that in some cases the number of solutions of KKT is larger than the bound, which agrees with our discussion on solutions of KKT and Lagrange systems in Remark~\ref{remark:LagrangevsKKT}. 

\medskip
In addition to analyzing the number of solutions of the KKT and Lagrange systems, we are interested in comparing the different solution methods for the optimization problem. Therefore, we compare the optimal solution found by solving these systems with \texttt{HomotopyContinuation.jl} with the one found by \texttt{Ipopt} and \texttt{GloptiPoly3}. 
Although \texttt{HomotopyContinuation.jl} is not guaranteed to find all solutions to the KKT and Lagrange systems, we observe that this approach yields a reward that is at least as high as the one obtained by the interior point method \texttt{Ipopt} and in a few instances strictly higher. In fact, solving the optimization problem with \texttt{GloptiPoly3} returns a certificate for the optimality of the result, which in all computed instances coincides with the optimal value obtained by solving the KKT and Lagrange systems with \texttt{HomotopyContinuation.jl}. 
That is, \texttt{GloptiPoly3} offers numerical evidence that they always provide globally optimal solutions. 
It is noteworthy that in all computed instances using \texttt{GloptiPoly3}, the optimal value of the optimization problem is already attained at the first order relaxation of the Lasserre hierarchy~\cite{Lasserre:FirstLasserreRelaxation}. We conjecture that objective value exactness for the first order relaxation of~\eqref{eq:polyFormRewMax} holds with high probability for generic input data. 
Since the size of the SDP depends very sensitively on the order of the relaxation, this conjecture would remedy one of the major drawbacks of the SDP relaxation method.

\medskip
Finally, we observed that solving the Lagrange equations only over the relevant boundary components is up to two orders of magnitude faster than solving them over all boundary components. The improvements become more pronounced when $g_\beta$ has small fibers in which we can exclude more faces by the means of Theorem~\ref{theo: location_of_maximizers}; see also Table~\ref{table:bounds}. 
The computation times for the solution of the KKT system are on the same order to magnitude as the computation time of the solution of the Lagrange systems over all boundary components. KKT was slightly faster when $g_\beta$ has small fibers and slightly slower when $g_\beta$ has large fibers.

\FloatBarrier 

\paragraph{Reproducibility statement} 
The computer code for our experiments is publicly available at  {\small\url{https://github.com/marinagarrote/Algebraic-Optimization-of-Sequential-Decision-Rules}}.
We conducted our experiments using \texttt{Julia}~\cite{bezanson2017julia} version 1.7.0, an open source programming language under the MIT license.    
We used the Julia package \texttt{HomotopyContinuation.jl} version 2.6.3, which is freely available for personal use under the MIT license, 
and \texttt{Ipopt.jl} version 0.7.0, {which is a Julia interface to the \texttt{COIN-OR} nonlinear solver \texttt{Ipopt} available under the EPL (Eclipse Public License) open-source license}. 
The convex relaxation is computed via the freeware \texttt{GloptiPoly3} implemented in Matlab, for which there also exists an Octave implementation.

\section{Conclusion and Outlook}

Reward optimization in infinite-horizon discounted MDPs with state aggregation and stationary stochastic policies is equivalent to a polynomially constrained linear objective problem. We obtained a characterization of the feasible region of state-action frequencies for this problem as the intersection of a product of affine varieties of rank one matrices, an affine space and a simplex. 
Using this description as a polynomial optimization problem, we derived an upper bound on the number of critical points of the reward optimization problem. 

To solve the polynomial optimization problem, we considered KKT equations and the Lagrange system over individual boundary components, 
where we leveraged knowledge about the location of maximizers on lower dimensional boundary components. 
The relatively small number of solutions observed in the experiments indicate that there is room for refining the theory either to obtain tighter estimates of the algebraic degree or also tighter descriptions of the possible number of feasible solutions. 
Using a convex relaxation to an SDP we obtained empirical evidence that our approach of solving the critical equations provides global maximizers of the reward. This is in strong contrast to naive gradient optimization, which yields only locally optimal solutions for this problem. 
In our experiments, the first order relaxation produced exact objective values, which indicates an interesting direction for further research.

\section*{Acknowledgment} 
The authors are grateful to Bernd Sturmfels for insightful discussions. 
GM and JM have been supported by ERC Starting Grant 757983 and DFG SPP 2298 Grant 464109215. 
GM has been supported by NSF CAREER Award DMS-2145630. 
JM acknowledges support from the International Max Planck Research School for Mathematics in the Sciences (IMPRS MiS) and the Evangelisches Studienwerk Villigst e.V. 
%
%%%%%%%%%%%%%%%%%%%%%%%%%%%%%%%%%%%%%%%%%%%%%%%%%%%%%%%%%%%%%%%%%%%%%%%%%%%%%%%%%%%%%%
\bibliographystyle{amsalpha} 
\bibliography{bib}

\end{document}